\definecolor{labelkey}{rgb}{0.6,0,1}
\theoremstyle{plain}
\newtheorem{theorem}{Theorem}[section]
\newtheorem{lemma}[theorem]{Lemma}
\newtheorem{corollary}[theorem]{Corollary}
\theoremstyle{definition}
\newtheorem{definition}[theorem]{Definition}
\newtheorem{example}[theorem]{Example}
\def\bhyp#1{\begin{equation}\label{#1}\begin{array}{c}}
\def\ehyp{\end{array}\end{equation}}
\newcounter{cst}
\theoremstyle{remark}
\newtheorem{remark}[theorem]{Remark}
\numberwithin{equation}{section}
\numberwithin{figure}{section}
\newcommand{\RR}{{\mathbb R}}
\newcommand{\NN}{{\mathbb N}}
\def\O{\Omega}
\def\dsp{\displaystyle}
\def\bfn{\mathbf{n}}
\def\disc{{\mathcal D}}
\def\mesh{{\mathcal M}}
\newcommand{\polyd}{{\mathcal T}}
\def\edges{{\mathcal E}}
\def\edge{\sigma}
\def\xcv{x_K}
\def\cv{K}
\newcommand{\edgescv}{{{\edges}_K}}  
\newcommand{\edgesext}{{{\edges}_{\rm ext}}} 
\newcommand{\edgesint}{{{\edges}_{\rm int}}} 
\newcommand{\centers}{\mathcal{P}}
\def\dr{\partial}
\newcommand{\centeredge}{\overline{x}_\edge} 
\newcommand{\cI}{{\mathcal I}}
\newcommand{\x}{\pmb{x}}
\newif\ifcorr\corrtrue
\definecolor{violet}{rgb}{0.580,0.,0.827}
\def\bpsi{{\boldsymbol \psi}}
\newcommand{\ud}{\, \mathrm{d}} 
\def\div{\mathop{\rm div}}
\title[Analysis of schemes for anisotropic reaction diffusion models]{A gradient Discretisation Method For Anisotropic Reaction Diffusion Models with applications to the dynamics of brain tumours}
\author{Yahya Alnashri}
\address[Yahya Alnashri]{Department of Mathematics, Al-Qunfudah University College, Umm Al-Qura University, Saudi Arabia}
\email{yanashri@uqu.edu.sa}
\author{Hasan Alzubaidi}
\address[Hasan Alzubaidi]{Department of Mathematics, Al-Qunfudah University College, Umm Al-Qura University, Saudi Arabia}
\email{hmzubaidi@uqu.edu.sa}
\subjclass[2010]{35K57,65N12,65M08}
\keywords{A gradient discretisation method (GDM), Gradient schemes, Convergence analysis, Existence of weak solutions, Anisotropic reaction diffusion models, Dirichlet and Neumann boundary conditions, Non conforming finite element methods, Finite volume schemes, Hybrid mixed mimetic (HMM) method, Crouzeix--Raviart scheme, Brain tumour dynamics, Fractional anisotropy.}     
\date{\today}
\begin{document}
\newcommand{\subscript}[2]{$#1 _ #2$}

\begin{abstract}
A gradient discretisation method (GDM) is an abstract setting that designs the unified convergence analysis of several numerical methods for partial differential equations and their corresponding models. In this paper, we study the GDM for anisotropic reaction diffusion problems, based on a general reaction term, with Neumann and Dirichlet boundary conditions. With natural regularity assumptions on the exact solution, the framework enables us to provide proof of the existence of weak solutions for the problem, and to obtain a uniform--in--time convergence for the discrete solution and a strong convergence for its discrete gradient. It also allows us to apply non conforming numerical schemes to the model on a generic grid; (the Crouzeix--Raviart scheme and the hybrid mixed mimetic (HMM) methods). Numerical experiments using the HMM method are performed to study the growth of glioma tumours in heterogeneous brain environment. The dynamics of their highly diffusive nature is also measured using the fraction anisotropic measure. The validity of the HMM is examined further using four different mesh types. The results indicate that the dynamics of the brain tumour is still captured by the HMM scheme, even in the event of a highly heterogeneous anisotropic case performed on the mesh with extreme distortions.
\end{abstract}

\maketitle

\section{Introduction}
\label{introduction}
In this paper, we study the following anisotropic reaction diffusion model:
\begin{subequations}\label{problem-rm}
\begin{align}
\partial_t \bar c(\x,t)-\div\big({\bf A}(\x)\nabla \bar c(\x,t) \big)&=F(\bar c(\x,t)), \quad (\x,t) \in \O\times (0,T),\label{rm-strong1}\\
\nabla\bar c(\x,t) \cdot \mathbf{n}&= 0, \quad  (\x,t) \in \partial\O\times (0,T), \label{rm-strong2}\\
\bar c(\cdot,0)&=c_{\rm ini} \mbox{ on } \O. \label{rm-strong3}
\end{align}
\end{subequations}
With a particular choice of the reaction term $F$, the model can expresses brain tumours, in which unknown $\bar c$ represents the density of cancerous cells, and homogeneous zero flux boundary conditions state that there is no diffusion of tumor cells out-with the brain region and the domain $\O \subset \RR^d, (d\geq 1)$.


\par Invasive diffuse brain tumors that frequently recur in spite of the improvements in therapy plans in recent years, gliomas can often prove fatal within six months to a year of recurrence \cite{1,2}. Treatment plans generally encompass chemotherapy, radiotherapy, and surgery, but despite all the advances in these therapies, it is still rare for such tumors to be completely cured \cite{3,4}. A primary problem in the treatment of glioma is that they are highly diffusive and have heterogeneous invasion rates that create invisible antigen tumors that cannot be detected with current resolutions of imaging \cite{3,4,5}. The heterogeneous spread pattern is probably attributable to the anisotropic invasion of glioma cells through the brain's aligned structures, e.g., the bundled neural fiber tracts that characterize white matter \cite{5,6,7}. The brain chiefly comprises two forms of tissue, white matter and gray matter. Gray matter comprises glial and neuronal cell bodies controlling the activity of the brain; white matter is used by glioma cells as an invasion route between areas of gray matter \cite{5}. Evidence from research has suggested that within white matter tumor diffusion is anisotropic, while within gray matter it is isotropic \cite{7,8}.
\par One means of forecasting the invasive pathways is mathematical modeling, and researchers have developed several macroscopic models using diffusion tensor imaging (DTI) data for informing the architecture of white matter and simulating a glioma's non-uniform growth \cite{9,10}. DTI is a means of imaging that takes measures of the way in which water molecules are an-isotropically diffused within a tissue and could potentially offer predictions of how a tumor will expand and so direct therapy planning \cite{11,12}.
\par Reaction diffusion models have been created in order to forge a link between medical imaging and tumor growth models. Tracqui et al. \cite{13} suggested one of the earliest reaction diffusion models that integrated medical imaging information, specifically tumor size and the brain's geometry. Cruywagen at al \cite{14} expanded on this concept and suggested employing two tumor cell populations. Burgess et al.\cite{15} extended the model into three dimensions, emphasizing the part played in glioma growth by diffusion. Many researchers have focused on the macroscopic process of glioma expansion, employing reaction diffusion type partial differential equations (PDEs), with proliferation represented by the reaction term and infiltration represented by the diffusion term \cite{5}.

\par The first models made an assumption of homogeneous and isotropic growth with ${\bf A}$ set as a constant and scalar diffusion; subsequent reaction models have been employed for the description of the growth of such tumors within the brain's heterogeneous environment. One example was Swanson et al. \cite{16} who examined a spatially heterogeneous diffusion coefficient, with ${\bf A}$ set as being notably greater in white matter compared to gray matter for the description of the swifter invasion noted in these regions. Swanson et al. \cite{17} expanded to a three--dimensional model, using data regarding gray and white matter areas harvested from anatomical imaging. 
Furthermore, studies of medical images have demonstrated that tumor cells have a tendency to follow water diffusion patterns; measurement of this can be achieved by employing magnetic resonance diffusion tensor imaging (MR-DTI). Expanding upon tumor cells' differential motility within a variety of tissues in such a context, numerous researchers have built inhomogenous diffusion tensors ${\bf A}$ in order to model tumor cell diffusion using images from the diffusion tensor (DTI) \cite{9,10,18}. 
DTI offers data regarding anisotropic diffusion as shown by eigenvalues (magnitude) and eigenvectors (direction). The tensors were constructed by employing anisotropic (ellipsoid) diffusion for white matter, and isotropic (spherical) tensors for gray matter \cite{11,12,18}.

\par{}
While there has been much theoretical study of this model as part of a general theory of reaction diffusion equations \cite{45}, it is typically very rare to solve models that capture the gross behaviour of glioma tumors in heterogeneous brain tissue based on data imaging. A number of different numerical approaches for the description of glioma tumors' heterogeneous rate of invasion and the dynamics of their highly diffusive nature (mostly without full convergence analysis) have been employed, for example, see \cite{23,24,25,26,22,27,28,29,new-51}. However, few studies have so far been paid to the convergence analysis of non conforming methods for the reaction diffusion equation and its corresponding models. For example, finite volume method is proposed to approximate the convection diffusion reaction equations \cite{new-49} and the Nagumo--type equations \cite{new-50}. 

\par In this work, we develop a gradient discretisation method (GDM) to the model \eqref{problem-rm}. The (GDM) is a generic framework that can find a unified convergence analysis, in which be applicable to multiple families numerical schemes instead of conducting individual convergence analysis for each numerical scheme. The efficiency of the framework is ensured under a limited number of properties (depending on the model). See the monograph \cite{30} for details. By approximating the model \eqref{problem-rm} by the GDM, we afford
\begin{itemize}
\item \emph{Comprehensive convergence analysis of numerical schemes for anisotropic reaction diffusion models.}

\item \emph{Analysis that carried out based on standard regularity assumptions on the data, which can be established in realistic models.}
\item \emph{Analysis that can be easily extended to the model \eqref{problem-rm} with different boundary conditions.}

\item \emph{Implementation of finite volume methods which can be performed on generic grids to deal with highly heterogeneous anisotropic problems.}
\end{itemize}

\par This paper is organised as follows. Section \ref{sec-ds} introduces the discrete elements to approximate the considered reaction diffusion model together with some basic properties to guarantee the convergence of approximation schemes. This is followed by Section \ref{sec-mr}, which is concerned with the approximate scheme and its convergence results. Section \ref{sec-ex} presents two examples of non conforming schemes that fit into the gradient discretisation method and have been not yet proposed to any type of anisotropic reaction diffusion models; the Crouzeix--Raviart scheme and the hybrid mixed mimetic (HMM) methods. They simultaneously provide an approximation of the solution and its gradient on a generic grid. In Section \ref{sec-proof} we use the compactness argument to prove our convergence results (Theorem \ref{th-mr}) under natural assumptions on data. This approach relies on establishing energy estimates on a discrete solution under standard assumptions on the model data. One benefit of our analysis is to prove the existence of a weak solution to \eqref{problem-rm}, which does not need to be assumed. In Section \ref{sec-fn} we show that the GDM can successfully be extended to the reaction diffusion equations subject to non homogeneous Dirichlet boundary condition. Finally, in Section \ref{sec-nr}, some numerical experiments using the hybrid mixed mimetic (HMM) method are provided to study the growth of brain tumours in heterogeneous environment. The validity of the HMM scheme is examined further using anisotropic cases performed on different generic meshes.       

\section{Discrete setting}\label{sec-ds}
The idea of the gradient discretisation method is to construct appropriate gradient discretisations, made of discrete space and operators, to approximate the continuous model provided that written in the weak sense. Writing the equivalent weak formulation  with replacing the continuous elements by the discrete ones yields a numerical scheme called a gradient scheme (GS). Let us now start with recalling the notions of the gradient discretisation method, that are suitable to discretise partial differential equations with homogenous Neumann boundary conditions defined as in \cite{30}.  

\begin{definition}[gradient discretisations]\label{def-gd-rm}
Let $\O$ be an open subset of $\RR^d$ (with $d \geq 1$) and $T>0$. A gradient discretisations for the anisotropic reaction diffusion model \eqref{problem-rm} is $\disc=(X_{\disc},\Pi_\disc, \nabla_\disc, J_\disc, (t^{(n)})_{n=0,...,N}) )$, where
\begin{itemize}
\item the set of discrete unknowns $X_{\disc}$ is a finite dimensional vector space on $\RR$,
\item the linear mapping $\Pi_\disc : X_{\disc} \to L^2(\O)$ is the reconstructed function, 
\item the linear mapping $\nabla_\disc : X_{\disc} \to L^2(\O)^d$ is a reconstructed gradient, which must be chosen such that
\begin{equation}\label{norm-disc}
\| \varphi ||_\disc = || \Pi_\disc \varphi ||_{L^2(\O)}+ || \nabla_\disc \varphi ||_{L^2(\O)^d} 
\end{equation}
is a norm on $X_{\disc}$, 
\item the linear continuous mapping $J_\disc: L^2(\O) \to X_\disc$ is an interpolation operator for the
 initial conditions,
\item $t^{(0)}=0<t^{(1)}<....<t^{(N)}=T$ are time steps. 
\end{itemize}
\end{definition}

For $(\varphi^{(n)})_{n\in\NN} \subset X_\disc$, let us define the piecewise constant in time functions $\Pi_\disc \varphi: (0,T]\to L^2(\O)$, $\nabla_\disc \varphi: (0,T]\to L^2(\O)^d$,and $\delta_\disc \varphi : (0,T] \to L^2(\O)$ as follows. For a.e $\x\in\O$, for all $n\in\{0,...,N-1 \}$ and for all $t\in (t^{(n)},t^{(n+1)}]$,
\begin{equation*}
\begin{aligned}
&\Pi_\disc \varphi(\x,0):=\Pi_\disc \varphi^{(0)}(\x), \quad \Pi_\disc \varphi(\x,t):=\Pi_\disc \varphi^{(n+1)}(\x),\\
&\nabla_\disc \varphi(\x,t):=\nabla_\disc \varphi^{(n+1)}(\x), \quad
\delta_\disc \varphi(t)=\delta_\disc^{(n+\frac{1}{2})}\varphi:=\frac{\Pi_\disc(\varphi^{(n+1)}-\varphi^{(n)})}{\delta t^{(n+\frac{1}{2})}},
\end{aligned}
\end{equation*}
with setting $\delta t^{(n+\frac{1}{2})}=t^{(n+1)}-t^{(n)}$ and $\delta t_\disc=\max_{n=0,...,N-1}\delta t^{(n+\frac{1}{2})}$.

Due to the flexibility of choices of gradient discretisations, various numerical scheme families fit into the GDM, starting from conforming, non conforming and mixed finite elements to nodal mimetic finite differences and hybrid mimetic mixed methods \cite{31,32,33,34,35,36}. Two examples are presented below and taken from \cite{30}.

\begin{example}
In conforming $\mathbb P 1$ finite element method, the discrete space $X_{\disc}$ consists of vectors of values at the nodes of the mesh, the reconstructed operator $\Pi_\disc \varphi$ is the piecewise linear continuous function that takes these values at the nodes, and $\nabla_\disc \varphi = \nabla(\Pi_\disc \varphi)$.

\par For non conforming $\mathbb P 1$ finite element method, the discrete space $X_\disc$ is made of piecewise linear functions on a triangle $\mathcal T$, which are continuous at the edge mid-points. Unlike the conforming methods, the reconstructed operators are respectively $\Pi_\disc=\boldmath{Id}$ and $\nabla_\disc=\nabla_B$ (the broken gradient), i.e.,
 \[
 \mbox{for all } \varphi \in X_\disc, \mbox{ for all a triangle } \tau \in \mathcal T,\; \nabla_B \varphi(\x)=\nabla(\varphi_{|\tau}), \; \forall \x \in \tau.
 \]
\end{example}

As a means of constructing converging schemes, the gradient discretisations elements must take the properties of continuous space and operators. The quality of the choice of gradient discretisations elements can be measured through two parameters, correspond to errors in an interpolation of function by smooth ones and a discrete Stokes formula. 

\begin{definition}[Consistency]\label{def:cons-rm}
For $\varphi\in H^1(\O)$, define
$S_{\mathcal{D}} : H^1(\O)\to [0, +\infty)$ by
\begin{equation}\label{cons-rm1}
S_{\mathcal{D}}(\varphi)= 
\min_{w\in X_\disc}\left(\| \Pi_{\mathcal{D}} w - \varphi \|_{L^{2}(\Omega)}
+ \| \nabla_{\mathcal{D}} w - \nabla \varphi \|_{L^{2}(\Omega)^{d}}\right).
\end{equation}
A sequence $(\mathcal{D}_{m})_{m \in \mathbb{N}}$ of gradient discretisations is \emph{consistent} if, as $m \to \infty$ 
\begin{itemize}
\item for all $\varphi \in H^1(\O)$, $S_{\disc_m}(\varphi) \to 0$,
\item for all $\varphi \in L^2(\O)$, $\Pi_{\disc_m}J_{\disc_m}\varphi \to \varphi$ strongly in $L^2(\O)$,
\item $\delta t_{\disc_m} \to 0$.
\end{itemize}
\end{definition}
\begin{definition}[Limit--conformity]\label{def:lconf-rm}
For $\bpsi \in H_{\rm div}$, define $W_{\mathcal{D}} : H_{\rm div} \to [0, +\infty)$ by
\begin{equation}\label{long-rm}
W_{\mathcal{D}}(\bpsi)
 = \sup_{w\in X_\disc\setminus \{0\}}\frac{\Big|\dsp\int_{\Omega}(\nabla_{\mathcal{D}}w\cdot \bpsi + \Pi_{\mathcal{D}}w \div (\bpsi)) \ud \x \Big|}{|| w||_\disc },
\end{equation}
where $H_{\rm div}=\{\bpsi \in L^2(\O)^d\;:\; {\rm div}\bpsi \in L^2(\O),\; \bpsi\cdot\bfn=0 \mbox{ on } \partial\O  \}$.
 
A sequence $(\disc_m)_{m\in \NN}$ of gradient discretisations is \emph{limit-conforming} if for all $\bpsi \in H_{\rm div}$, $W_{\disc_m}(\bpsi) \to 0$, as $m \to \infty$.
\end{definition}
Lastly, to address non linearity coming from the reaction term, the operator $\Pi_\disc$ is required to meet the compactness properties, as outlined below.
\begin{definition}[Compactness]\label{def:compact}
A sequence of gradient discretisations $(\disc_m)_{m\in\NN}$ in the sense of Definition \ref{def-gd-rm} is \emph{compact} if for any sequence $(\varphi_m )_{m\in\NN} \in X_{\disc_m}$, such that $(|| \varphi_m ||_{\disc_m})_{m\in \NN}$ is bounded, the sequence $(\Pi_{\disc_m}\varphi_m )_{m\in \NN}$ is relatively compact in $L^2(\O)$.
\end{definition}

\section{Main Results}\label{sec-mr}
As explained before, the GDM relies on the weak formulation. We introduce here the weak formulation and its approximation scheme created from the gradient discretisations presented in the previous section. We consider the problem \eqref{problem-rm} under the following assumptions: 

\begin{subequations}\label{assump-rm}
\begin{gather}
\O \mbox{ is an open bounded connected subset of $\RR^d\; (d \geq 1)$ with a Lipschitz boundary},\nonumber\\
T >0, c_{\rm ini} \in L^2(\O),
\end{gather}
\begin{gather}
{\bf A}: \O \to \mathbb M_d(\RR) \mbox{ is a measurable function (where $\mathbb M_d(\RR)$ is the set of $d \times d$ matrices)}\nonumber\\
\mbox{and there exists $\underline \lambda $, $\overline \lambda >0$ such that for a.e. $\x \in \O$},\\
\mbox{${\bf A}(\x)$ is symmetric with eigenvalues in $[\underline \lambda, \overline \lambda]$, }\nonumber
\end{gather}
\begin{gather}\label{assump-rm-3}
\mbox{$F: \RR \to \RR$ is a polynomial function, such that } F \in L^2(\O\times(0,T)),\nonumber\\
\mbox{and there exists constants $C_{F1},\; C_{F2} \geq 0$ such that for any $s\in \RR$,}\nonumber\\
|F(s)| \leq C_{F1} + C_{F2}|s|.
\end{gather}
\end{subequations}

Under assumptions \eqref{assump-rm}, $\bar c$ is said to be a weak solution of \eqref{problem-rm} if
\begin{equation}\label{rm-weak}
\left.
\begin{aligned}
&\bar c \in L^2(0,T;H^1(\O)) \mbox{ and, for all } \bar\varphi \in L^2(0,T;H^1(\O)),\\
&\mbox{such that, $\partial_t \bar\varphi \in L^2(\O\times(0,T))$ and $\bar\varphi(\cdot,T)=0$,}\\
&-\dsp\int_0^T \dsp\int_\O \bar c(\x,t) \partial_t \bar\varphi(\x,t) \ud \x  \ud t
-\dsp\int_\O c_{\rm ini}(\x)\bar\varphi(\x,0) \ud \x\\ 
{}&\quad+\dsp\int_0^T\int_\O {\bf A}(\x)\nabla \bar{c}(\x,t) \cdot \nabla \bar\varphi(\x,t)\ud \x \ud t\\
{}&\quad\quad\quad\quad\quad= \dsp\int_0^T\dsp\int_\O F(\bar c(\x,t))\bar\varphi(\x,t) \ud \x \ud t.
\end{aligned}
\right.
\end{equation}

Note that this problem can be written in the sense of distributions. Take $\bar\varphi\in C^\infty(\O\times(0,T))$. Integration by parts and the density of $C^\infty([0,T];H^1(\O))$ in $L^2(0,T;H^1(\O))$, the following holds:
\begin{equation*}
\left.
\begin{aligned}
&\bar c \in L^2(0,T;H^1(\O)) \cap C([0,T];L^2(\O));\;
\partial_t\bar c \in L^2(0,T;H^{-1}(\O)),\\
&\bar c(\cdot,0)=c_{\rm ini} \mbox{ and, for all } \bar\varphi \in L^2(0,T;H^1(\O))\\
&\dsp\int_0^T \langle \partial_t\bar c(\x,t),\bar\varphi(\x,t) \rangle_{H^{-1},H^1}\ud t
+\dsp\int_0^T\int_\O {\bf A}(\x)\nabla \bar{c}(\x,t) \cdot \nabla \bar\varphi(\x,t)\ud \x \ud t\\
{}&\quad\quad\quad\quad\quad= \dsp\int_0^T\dsp\int_\O F(\bar c(\x,t))\bar\varphi(\x,t) \ud \x \ud t.
\end{aligned}
\right.
\end{equation*}


The gradient scheme for the problem \eqref{rm-weak}, that coming from the gradient discretisations $\disc$ in the sense of Definition \eqref{def-gd-rm} is given by
\begin{equation}\label{rm-disc-pblm}
\begin{aligned}
&\mbox{find a family $(c^{(n)})_{n=0,...,N} \in X_\disc^{N+1}$, $c^{(0)}=J_\disc c_{\rm ini}$, }\\
&\mbox{ and for all $n=0,...,N-1$, $c^{(n+1)}$ satisfies}\\
&\dsp\int_\O \delta_\disc^{(n+\frac{1}{2})} c(\x) \Pi_\disc \varphi(\x)
+ \dsp\int_\O {\bf A}(\x) \nabla_\disc c^{(n+1)}(\x) \cdot \nabla_\disc \varphi(\x)\ud \x\\
&\qquad=\dsp\int_\O F(\Pi_\disc c^{(n+1)}(\x))\Pi_\disc \varphi(\x) \ud \x, \quad \forall \varphi \in X_\disc.
\end{aligned}
\end{equation}

The following theorem states the main theoretical results of this work.
\begin{theorem}\label{th-mr}
Let assumptions \eqref{assump-rm} hold and $(\disc_m)_{m\in\NN}$ be a sequence of gradient discretisations, that is consistent, limit-conforming and compact. For $m \in \NN$, let $c_m$ be a solution to the gradient scheme \eqref{rm-disc-pblm} with $\disc=\disc_m$. Then there exists a solution $\bar c$ of \eqref{rm-weak} and a subsequence of gradient discretisations, denoted again by $(\disc_m)_{m\in\NN}$, such that, as $m \to \infty$,
\begin{subequations}
\label{eq-thm-conv}
\begin{align}
\dsp\sup_{t\in[0,T]}\| \Pi_{\disc_m}c_m(t) - \bar c(t) \|_{L^2(\O)} \to 0,\\
|| \nabla_{\disc_m}c_m - \nabla\bar c ||_{L^2(\O \times (0,T))^d} \to 0.
\end{align}
\end{subequations}
\end{theorem}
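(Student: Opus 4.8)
The plan is to run the standard compactness argument for gradient schemes, letting the four structural properties of $(\disc_m)_{m\in\NN}$ (norm, consistency, limit-conformity, compactness) carry the analysis. \textbf{Step 1 (energy estimates and existence).} Testing \eqref{rm-disc-pblm} with $\varphi=\delta t^{(n+\frac12)}c^{(n+1)}$, summing over $n=0,\dots,k-1$ for $k\le N$, using the elementary inequality $a(a-b)\ge\frac12(a^2-b^2)$ on the time-difference term, the coercivity of ${\bf A}$ (eigenvalues $\ge\underline\lambda$) on the diffusion term, and absorbing the reaction contribution through $|F(s)|\le C_{F1}+C_{F2}|s|$, Young's inequality and a discrete Gronwall lemma, I obtain, together with the boundedness of $\|\Pi_{\disc_m}J_{\disc_m}c_{\rm ini}\|_{L^2(\O)}$ furnished by consistency, a bound uniform in $m$:
\[
\|\Pi_{\disc_m}c_m\|_{L^\infty(0,T;L^2(\O))}+\|\nabla_{\disc_m}c_m\|_{L^2(\O\times(0,T))^d}\le C.
\]
Existence of a solution of the nonlinear scheme at each time step follows from this a priori bound and Brouwer's fixed point theorem on the finite dimensional space $X_{\disc_m}$.

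\textbf{Step 2 (time translates and compactness).} Reading \eqref{rm-disc-pblm} as an identity for the functional $\varphi\mapsto\int_\O\delta_\disc^{(n+\frac12)}c\,\Pi_\disc\varphi$ and using the diffusion bound together with the linear growth of $F$, I bound $\delta_{\disc_m}c_m$ in $L^2(0,T)$ measured in the dual norm attached to $\disc_m$. Feeding this estimate, the energy bound of Step 1 and the compactness property into the discrete Aubin--Simon compactness theorem of the GDM framework \cite{30}, I extract a subsequence along which $\Pi_{\disc_m}c_m\to\bar c$ strongly in $L^2(\O\times(0,T))$; limit-conformity then identifies the weak-$L^2$ limit of $\nabla_{\disc_m}c_m$ as $\nabla\bar c$, with $\bar c\in L^2(0,T;H^1(\O))$. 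The same estimates feed the uniform-in-time compactness result of \cite{30}, yielding convergence of $\Pi_{\disc_m}c_m$ to $\bar c$ in $C([0,T];L^2_{\rm w}(\O))$, to be upgraded to the strong uniform-in-time statement once the energy identity of Step 4 is available.

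\textbf{Step 3 (the limit solves the weak problem).} Fixing a smooth $\bar\varphi$ with $\bar\varphi(\cdot,T)=0$, I interpolate it by an element of $X_{\disc_m}$ (interpolation error controlled by $S_{\disc_m}$), perform a discrete integration by parts in time in \eqref{rm-disc-pblm}, and pass to the limit $m\to\infty$: the time-difference term converges by the strong convergence of $\Pi_{\disc_m}c_m$ and consistency on $\bar\varphi$, the initial term by consistency of $J_{\disc_m}$, the diffusion term by weak convergence of $\nabla_{\disc_m}c_m$ together with convergence of the reconstructed test gradients, and the reaction term because $\Pi_{\disc_m}c_m\to\bar c$ in $L^2$ and a.e., so that by continuity of $F$, the growth bound, and Vitali's theorem (equi-integrability of $|F(\Pi_{\disc_m}c_m)|^2$ from the $L^2$ bound), $F(\Pi_{\disc_m}c_m)\to F(\bar c)$ in $L^2(\O\times(0,T))$. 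Hence $\bar c$ solves \eqref{rm-weak}; in particular no existence result for \eqref{rm-weak} need be assumed.

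\textbf{Step 4 (strong gradient convergence and conclusion).} I compare energies. Testing \eqref{rm-weak} with $\bar\varphi=\bar c$ (licit after the distributional reformulation, with $\int_0^T\langle\partial_t\bar c,\bar c\rangle\ud t=\frac12\|\bar c(T)\|_{L^2(\O)}^2-\frac12\|c_{\rm ini}\|_{L^2(\O)}^2$) yields the continuous energy balance; testing \eqref{rm-disc-pblm} with $\delta t^{(n+\frac12)}c^{(n+1)}$ and summing yields its discrete analogue. Taking $\liminf_{m\to\infty}$ of the discrete relation — using the strong $L^2$ convergence in the term $\int_0^T\int_\O F(\Pi_{\disc_m}c_m)\Pi_{\disc_m}c_m$ and the $C([0,T];L^2_{\rm w}(\O))$ convergence in the terminal term $\|\Pi_{\disc_m}c_m(T)\|_{L^2(\O)}$ — gives
\[
\limsup_{m\to\infty}\int_0^T\!\!\int_\O{\bf A}\,\nabla_{\disc_m}c_m\cdot\nabla_{\disc_m}c_m\ud\x\ud t\le\int_0^T\!\!\int_\O{\bf A}\,\nabla\bar c\cdot\nabla\bar c\ud\x\ud t.
\]
Since ${\bf A}$ is symmetric and uniformly elliptic, $\xi\mapsto(\int_0^T\int_\O{\bf A}\,\xi\cdot\xi\ud\x\ud t)^{1/2}$ is a norm equivalent to the $L^2(\O\times(0,T))^d$ norm; convergence of these norms together with the weak convergence $\nabla_{\disc_m}c_m\rightharpoonup\nabla\bar c$ forces the strong convergence asserted on the second line of \eqref{eq-thm-conv}, and this in turn closes the uniform-in-time convergence on the first line of \eqref{eq-thm-conv}. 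I expect the principal difficulty to lie in Step 2 — producing the dual-norm estimate on $\delta_{\disc_m}c_m$ in the correct functional setting and correctly invoking the discrete uniform-in-time compactness machinery — and, tied to it, the rigorous passage to the limit of the terminal term $\|\Pi_{\disc_m}c_m(T)\|_{L^2(\O)}$ in the energy identity, on which the strong gradient convergence ultimately rests.
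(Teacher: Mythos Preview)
Your proposal is correct and follows the same compactness-based strategy as the paper: energy estimates (Lemma~\ref{lemma-est-rm}), dual-norm bound on $\delta_\disc c$ (Lemma~\ref{lemma-est-dual}), discrete Aubin--Simon and weak-uniform-in-time compactness from \cite{30}, passage to the limit in the scheme, and finally an energy comparison for strong convergence.

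The one organisational difference is the order of the last two steps. The paper first proves the uniform-in-time $L^2$ convergence by working at \emph{each} $s\in[0,T]$: from the discrete energy inequality up to $s_m\to s$ and the weak lower semicontinuity $\int_0^s\!\!\int_\O{\bf A}|\nabla\bar c|^2\le\liminf_m\int_0^{s_m}\!\!\int_\O{\bf A}|\nabla_{\disc_m}c_m|^2$, it gets $\limsup_m\|\Pi_{\disc_m}c_m(s_m)\|_{L^2(\O)}\le\|\bar c(s)\|_{L^2(\O)}$ and upgrades weak to strong via \cite[Lemma~C.13]{30}. Only \emph{after} that does it treat the strong gradient convergence, where the terminal term is already under control. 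You reverse the order: you obtain strong gradient convergence first using only the weak $L^2(\O)$ convergence of $\Pi_{\disc_m}c_m(T)$ (which indeed gives the right inequality, since $\limsup_m(-\|\Pi_{\disc_m}c_m(T)\|^2)\le-\|\bar c(T)\|^2$), and then say this ``closes'' the uniform-in-time convergence. That final clause is where you are too brief: strong gradient convergence on $[0,T]$ does not by itself yield $L^\infty(0,T;L^2(\O))$ convergence; you still have to run the energy inequality at each intermediate time $s$ exactly as the paper does in its Step~3 (now the diffusion term passes to the limit by strong convergence rather than by $\liminf$, but the structure of the argument is the same). With that step made explicit, both orderings are valid.
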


\begin{remark}
Note that the proof of this theorem does not require to initially assume the existence and the uniqueness of the continuous weak solution. The convergence analysis provided here can establish the existence of at least one solution $\bar c$ to the problem \eqref{rm-weak}, see Corollary \ref{cor-rdm}.
\end{remark}

\section{$2$--Examples covered by the analysis}\label{sec-ex}
The analysis designed in this work can be applicable to many numerical schemes. We show here that two different non conforming methods can be expressed as the gradient schemes formats \eqref{rm-disc-pblm}. To do so, let us begin by recalling the definition of a generic polyhedral mesh as in  \cite{31}.

\begin{definition}[Polytopal mesh]\label{def:polymesh}~
Let $\Omega$ be a bounded polytopal open subset of $\RR^d$ ($d\ge 1$). 
A polytopal mesh of $\O$ is given by $\polyd = (\mesh,\edges,\centers)$, where:
\begin{enumerate}
\item $\mesh$ is a finite family of non empty connected polytopal open disjoint subsets of $\O$ (the cells) such that $\overline{\O}= \dsp{\cup_{K \in \mesh} \overline{K}}$.
For any $K\in\mesh$, $|K|>0$ is the measure of $K$ and $h_K$ denotes the diameter of $K$.

\item $\edges$ is a finite family of disjoint subsets of $\overline{\O}$ (the edges of the mesh in 2D,
the faces in 3D), such that any $\edge\in\edges$ is a non empty open subset of a hyperplane of $\RR^d$ and $\edge\subset \overline{\O}$.
We assume that for all $K \in \mesh$ there exists  a subset $\edgescv$ of $\edges$
such that $\dr K  = \dsp{\cup_{\edge \in \edgescv}} \overline{\edge}$. 
We then set $\mesh_\edge = \{K\in\mesh\,:\,\edge\in\edgescv\}$
and assume that, for all $\edge\in\edges$, $\mesh_\edge$ has exactly one element
and $\edge\subset\partial\O$, or $\mesh_\edge$ has two elements and
$\edge\subset\O$. 
$\edgesint$ is the set of all interior faces, i.e. $\edge\in\edges$ such that $\edge\subset \O$, and $\edgesext$ the set of boundary
faces, i.e. $\edge\in\edges$ such that $\edge\subset \dr\O$.
For $\edge\in\edges$, the $(d-1)$-dimensional measure of $\edge$ is $|\edge|$,
the centre of mass of $\edge$ is $\centeredge$, and the diameter of $\edge$ is $h_\edge$.

\item $\centers = (x_K)_{K \in \mesh}$ is a family of points of $\O$ indexed by $\mesh$ and such that, for all  $K\in\mesh$,  $\xcv\in K$ ($\xcv$ is sometimes called the ``centre'' of $\cv$). 
We then assume that all cells $K\in\mesh$ are  strictly $\xcv$-star-shaped, meaning that 
if $x\in \overline{K}$ then the line segment $[\xcv,x)$ is included in $K$.
\end{enumerate}
For a given $K\in \mesh$, let $\bfn_{K,\sigma}$ be the unit vector normal to $\sigma$ outward to $K$
and denote by $d_{K,\sigma}$ the orthogonal distance between $x_K$ and $\sigma\in\mathcal E_K$.
The size of the discretisation is $h_\mesh=\sup\{h_K\,:\; K\in \mesh\}$.
\end{definition}

\subsection{The Crouzeix--Raviart scheme}\label{sr-scheme}
It is recently known as the non conforming $\mathbb P 1$ finite element and linked to Stokes models \cite{Crouzeix}. Set as in \cite{new-52}
\begin{enumerate}
\item The space of unknowns is $X_\disc=\{ \varphi=(\varphi_\edge)_{\edge\in\edges}\;:\; v_\edge \in \RR   \}$.
\item The linear reconstructed function $\Pi_\disc$ is defined based on affine non conforming finite element basis function $e_K^\edge$, and give by
\[
\begin{aligned}
&\forall \varphi\in X_\disc, \forall K\in \mesh, \mbox{ for a.e. } \x \in K,\\
&\Pi_\disc \varphi(\x) =\dsp\sum_{\edge\in\edges_K}\varphi_\edge e_K^\edge(\x),
\end{aligned}
\]
\item The reconstructed gradient is defined by
\[
\begin{aligned}
&\forall \varphi\in X_\disc, \forall K\in \mesh,\\
&(\nabla_\disc \varphi)_{|K}=\nabla[(\Pi_\disc \varphi)_{|K}]=\dsp\sum_{\edge\in\edges_K}\varphi_\edge \nabla e_K^\edge.
\end{aligned}
\]
It is called a broken gradient (i.e. a piecewise constant on the cells).
\item The interpolant $J_\disc: L^2(\O) \to X_\disc$ is defined by:
\[
\forall w \in L^2(\O)\;:\; J_\disc w=(w_\edge)_{\edge\in \edges},
\mbox{ where $w_\edge=\dsp\frac{1}{|\edge|}\dsp\int_\edge w(\x) \ud \x$}.
\]
\end{enumerate}

The Crouzeix--Raviart scheme of Problem \eqref{rm-weak} is the gradient scheme \eqref{rm-disc-pblm} with the gradient discretisations constructed above. It is proved in \cite{new-52} that this gradient discretisations satisfies the three properties; the consistency, the limit conformity, and the compactness. Therefore, Theorem \ref{th-mr} provides the convergence of the Crouzeix--Raviart scheme for the anisotropic reaction diffusion model.

\subsection{The hybrid mixed mimetic (HMM) method}\label{sec:HMM}
It is found by \cite{42} that the HMM method is a framework gathering three different schemes: the (mixed-hybrid) mimetic finite differences methods, the hybrid finite volume method, and the mixed finite volume methods. The method can also be compatible with a generic mesh with non orthogonality assumptions. Let $\polyd$ be a polytopal mesh of $\O$ defined in Definition \ref{def:polymesh}.
\begin{enumerate}
\item The discrete space is 
\[
X_{\disc}=\{ v=((\varphi_{K})_{K\in \mathcal{M}}, (\varphi_{\sigma})_{\sigma \in \mathcal{E}})\;:\; \varphi_{K},\, \varphi_{\sigma} \in \RR,
\}.
\]
\item The non conforming a piecewise affine reconstruction $\Pi_\disc$ is defined by
\[
\begin{aligned}
&\forall \varphi\in X_\disc, \forall K\in \mesh, \mbox{ for a.e. } \x \in K,\\
&\Pi_\disc \varphi=\varphi_K\mbox{ on $K$}.
\end{aligned}
\]
\item The reconstructed gradients is piecewise constant on the cells (broken gradient), defined by
\[
\begin{aligned}
&\forall \varphi\in X_\disc,\; \forall K\in\mathcal M,\,\forall \sigma\in\mathcal E_K,\\
&\nabla_\disc \varphi=\nabla_{K}\varphi+
\frac{\sqrt{d}}{d_{K,\sigma}}R_K(\varphi)\mathbf{n}_{K,\sigma} \mbox{ on } D_{K,\edge},
\end{aligned}
\]
where a cell--wise constant gradient $\nabla_K(\varphi)$ and a stabilisation term $R_K(\varphi)$ are respectively defined by:
\[
\nabla_{K}\varphi= \dsp\frac{1}{|K|}\sum_{\sigma\in \edgescv}|\sigma|\varphi_\edge\mathbf{n}_{K,\sigma} \mbox{ and } R_K(\varphi)=(\varphi_\edge - \varphi_K - \nabla_K \varphi\cdot(\centeredge-x_K))_{\edge\in\edges_K}.
\]

\item The interpolant $J_\disc: L^2(\O) \to X_\disc$ is defined by:
\[
\begin{aligned}
&\forall w \in L^2(\O)\;:\; J_\disc w=((w_K)_{K\in \mesh},(w_\edge)_{\edge\in \edges}),\\
&\forall K\in\mesh,\; w_K=\dsp\frac{1}{|K|}\dsp\int_K w(\x) \ud \x \mbox{ and } \forall \edge\in\edges,\; w_\edge=0.
\end{aligned}
\]
\end{enumerate}

The HMM scheme of Problem \eqref{rm-weak} is the gradient scheme \eqref{rm-disc-pblm} with the gradient discretisations constructed above, it reads
\begin{equation}\label{hmm-rdm}
\begin{aligned}
&\mbox{find $(c^{(n)})_{n=0,...,N} \in X_\disc^{N+1}$, such that $c^{(0)}=J_{\disc}c_{\rm ini}$},\\ 
&\mbox{and for all $n=0,...,N-1$, $c^{(n+1)}$ satisfies, for all $\varphi \in X_\disc$} \\
&\dsp\sum_{K\in \mesh}\frac{|K|}{\delta t^{(n+\frac{1}{2})}}\Big( c_K^{(n+1)}-c_K^{(n)} \Big)\varphi_K
+\dsp\sum_{K\in \mesh}|K|{\bf A}(\x)\nabla_K c^{(n+1)}\cdot \nabla_K \varphi\\
&\quad+\dsp\sum_{K\in \mesh}(R_K\varphi)^T \mathbb B_K R_K (c^{(n+1)})
=\dsp\sum_{K\in \mesh}\varphi_K\dsp\int_K F(c_K^{(n+1)}) \ud \x,
\end{aligned}
\end{equation}
where $\mathbb B_K$ is a symmetric positive definite matrix of size $\mbox{Card}(\edges_K)$.

It is proved in \cite[Chapter 9]{30} that this gradient discretisations satisfies the three properties; the consistency, the limit conformity and the compactness. Therefore, Theorem \ref{th-mr} provides the convergence of the HMM scheme for the anisotropic reaction diffusion model.

The HMM scheme \eqref{hmm-rdm} can be presented in classical finite volume formats. With considering the linear fluxes $c\mapsto \mathcal F_{K,\sigma}(c)$ (for $K\in\mesh$ and $\sigma\in\edges_K$) defined:
for all $K \in \mesh$ and all $c,w \in X_\disc$,
\begin{align*}
\sum_{\sigma \in \mathcal{E}_K}|\sigma| \mathcal F_{K,\sigma}(c)
(w_K-w_\sigma)={}&\dsp\int_K {\bf A}(\x)\nabla_\disc c\cdot\nabla_\disc w \ud \x,
\end{align*}

Then Problem \eqref{hmm-rdm} can be written as, for all $n=0,...,N-1$,
\begin{align*}
\frac{|K|}{\delta t^{(n+\frac{1}{2})}} \Big( c^{(n+1)}-c^{(n)} \Big)&+\sum_{\sigma \in \mathcal{E}_K}|\sigma|\mathcal F_{K,\sigma}(u^{(n+1)})=\dsp\int_K F(c_K^{(n+1)}) \ud \x, \quad \forall K \in \mesh\\
\mathcal F_{K,\sigma}(c^{(n+1)})+\mathcal F_{L,\sigma}(c^{(n+1)})&= 0, \quad \forall \sigma\in\mathcal E_{\rm int}\mbox{ with }
\mesh_\sigma=\{K,L\},\\
\mathcal F_{K,\sigma}(c^{(n+1)})&=0, \quad \forall K \in \mesh\,,\forall \sigma \in \mathcal{E}_K \mbox{ such that }\sigma\subset  \partial\O.
\end{align*}


\section{Proof of The Main Results}\label{sec-proof}
In order to prove our convergence results, the discrete solution and its gradient must possess some energy estimates, which are established in the following lemmas.
\begin{lemma}[Estimates]
\label{lemma-est-rm}
Under assumptions \eqref{assump-rm}, let $\disc$ be a gradient discretisations and let $c \in X_\disc$ be a solution to the gradient scheme \eqref{rm-disc-pblm}. Then there exists a constant $C_1\geq 0$ depending only on $C_{F1}$, $C_{F2}$, $\O$ and $|| \Pi_\disc c^{(0)} ||_{L^2(\O)}$, such that
\begin{equation}\label{est-rm}
\dsp\sup_{t\in[0,T]}||\Pi_\disc c(t) ||_{L^2(\O)}
+
||\nabla_\disc c ||_{ L^2(\O \times (0,T))^d }
\leq C_1 .
\end{equation}
\end{lemma}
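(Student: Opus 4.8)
The plan is to test the gradient scheme \eqref{rm-disc-pblm} with $\varphi = c^{(n+1)}$ at each time level, sum over $n$, and control the terms one by one using the structural assumptions \eqref{assump-rm} — coercivity of $\mathbf A$, the growth bound on $F$, and a discrete Gronwall argument. First I would pick $\varphi = \delta t^{(n+\frac12)} c^{(n+1)}$ in \eqref{rm-disc-pblm} (so that the time-derivative term becomes $\int_\O \Pi_\disc(c^{(n+1)}-c^{(n)})\Pi_\disc c^{(n+1)}$). Using the elementary inequality $a(a-b) \ge \tfrac12(a^2 - b^2)$ applied to $\Pi_\disc c^{(n+1)}$ and $\Pi_\disc c^{(n)}$ in $L^2(\O)$, this first term is bounded below by $\tfrac12\big(\|\Pi_\disc c^{(n+1)}\|_{L^2(\O)}^2 - \|\Pi_\disc c^{(n)}\|_{L^2(\O)}^2\big)$. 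The diffusion term is bounded below by $\underline\lambda\, \delta t^{(n+\frac12)} \|\nabla_\disc c^{(n+1)}\|_{L^2(\O)^d}^2$ using the lower eigenvalue bound on $\mathbf A(\x)$.

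Next I would estimate the right-hand side. By the growth assumption $|F(s)| \le C_{F1} + C_{F2}|s|$, we get $\big|\int_\O F(\Pi_\disc c^{(n+1)})\Pi_\disc c^{(n+1)}\big| \le C_{F1}|\O|^{1/2}\|\Pi_\disc c^{(n+1)}\|_{L^2(\O)} + C_{F2}\|\Pi_\disc c^{(n+1)}\|_{L^2(\O)}^2$, and then Young's inequality turns the first piece into something absorbed by a constant plus a multiple of $\|\Pi_\disc c^{(n+1)}\|_{L^2(\O)}^2$. Summing the resulting inequality over $n = 0,\dots,m-1$ for an arbitrary $m \le N$ telescopes the time term, leaving
\[
\tfrac12 \|\Pi_\disc c^{(m)}\|_{L^2(\O)}^2 + \underline\lambda \sum_{n=0}^{m-1}\delta t^{(n+\frac12)}\|\nabla_\disc c^{(n+1)}\|_{L^2(\O)^d}^2 \le \tfrac12\|\Pi_\disc c^{(0)}\|_{L^2(\O)}^2 + C T + C\sum_{n=0}^{m-1}\delta t^{(n+\frac12)}\|\Pi_\disc c^{(n+1)}\|_{L^2(\O)}^2,
\]
with $C$ depending only on $C_{F1}, C_{F2}, |\O|$. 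A discrete Gronwall lemma (applied to the sequence $a_m = \|\Pi_\disc c^{(m)}\|_{L^2(\O)}^2$) then gives a uniform bound on $\max_{0\le m \le N}\|\Pi_\disc c^{(m)}\|_{L^2(\O)}^2$ depending only on $T$, the constants in \eqref{assump-rm}, and $\|\Pi_\disc c^{(0)}\|_{L^2(\O)}$; feeding this back into the left-hand side bounds the gradient sum, which is exactly $\|\nabla_\disc c\|_{L^2(\O\times(0,T))^d}^2$ by definition of the piecewise-constant-in-time reconstruction. Since $\Pi_\disc c(t)$ is piecewise constant in time taking the values $\Pi_\disc c^{(n+1)}$, the $L^\infty(0,T;L^2)$ norm equals $\max_{1\le n\le N}\|\Pi_\disc c^{(n)}\|_{L^2(\O)}$, which together with $\|\Pi_\disc c^{(0)}\|$ yields \eqref{est-rm}.

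The only genuinely delicate point is the discrete Gronwall step: one must be careful that the implicit term $C\delta t^{(n+\frac12)}\|\Pi_\disc c^{(n+1)}\|_{L^2(\O)}^2$ sits on the right with index $n+1$, so for $\delta t_\disc$ small enough (say $C\delta t_\disc \le \tfrac14$) this term can be partially absorbed into the left-hand side before invoking the standard discrete Gronwall inequality; alternatively one keeps $\delta t_\disc$ arbitrary and uses the version of discrete Gronwall that tolerates the $a_{n+1}$-term. (Strictly, one should also note that existence of a solution $c^{(n+1)}$ to the nonlinear system at each step is needed; since $F$ is a polynomial with at most linear growth the map is well-defined, but the energy estimate itself is the a priori bound that, combined with a topological degree or Brouwer fixed-point argument, secures existence — this is typically handled just before or just after this lemma.) Everything else is routine: Cauchy–Schwarz, Young, and telescoping.
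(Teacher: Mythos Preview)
Your proposal is correct and follows the same route as the paper: take $\varphi = \delta t^{(n+\frac12)} c^{(n+1)}$ in \eqref{rm-disc-pblm}, use $(\alpha-\beta)\alpha \ge \tfrac12(\alpha^2-\beta^2)$ on the time term, coercivity of $\mathbf A$ on the diffusion term, the linear growth bound on $F$ for the reaction term, and sum over $n$ up to an arbitrary $m$. The only difference is the closing step. The paper bounds the summed reaction term by $(\tfrac{\varepsilon}{2}+C_{F2})\|\Pi_\disc c\|_{L^\infty(0,T;L^2(\O))}^2$ and absorbs it directly into the left-hand side after taking $\sup_m$; you instead keep it as $C\sum_{n=0}^{m-1}\delta t^{(n+\frac12)}\|\Pi_\disc c^{(n+1)}\|_{L^2(\O)}^2$ and invoke a discrete Gronwall inequality. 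Your Gronwall route works for arbitrary $C_{F2}$ and $T$ (the paper's absorption, as written, tacitly requires a smallness of $C_{F2}$ --- indeed of $TC_{F2}$ --- that is not part of the hypotheses), at the price of the $\delta t_\disc$-smallness condition you already flag; both versions introduce a dependence of the constant on $T$ that the lemma statement does not list explicitly.
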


\begin{proof}
Take $n \in \{ 0,...,N-1\}$ and $\varphi:=\delta t^{ (n+\frac{1}{2}) }c^{(n+1)}$ in Scheme \eqref{rm-disc-pblm}, to get
\begin{equation*}
\begin{aligned}
\dsp\int_\O \Big(\Pi_\disc c^{(n+1)}(\x)&-\Pi_\disc c^{(n)}(\x)\Big) \Pi_\disc c^{(n+1)}(\x) \ud \x
+\dsp\dsp\int_{t^{(n)}}^{t^{(n+1)}}\int_\O {\bf A}(\x)|\nabla_\disc c^{(n+1)}(\x)|^2 \ud \x \ud t\\
&\quad= \dsp\int_{t^{(n)}}^{t^{(n+1)}}\int_\O F(\Pi_\disc  c^{(n+1)}(\x)) \Pi_\disc c^{(n+1)}(\x)  \ud \x \ud t.
\end{aligned}
\end{equation*}
For $\alpha, \beta \in \RR$, $(\alpha-\beta)\alpha \geq \frac{1}{2}( |\alpha|^2 -|\beta|^2 )$. Applying this inequality to the first term in the above equality yields
\begin{equation*}
\begin{aligned}
\frac{1}{2}\dsp\int_\O \Big[ |\Pi_\disc c^{(n+1)}(\x)|^2&-|\Pi_\disc c^{(n)}(\x)|^2 \Big] \ud \x
+\underline \lambda\dsp\int_{t^{(n)}}^{t^{(n+1)}}\int_\O |\nabla_\disc c^{(n+1)}(\x)|^2 \ud \x \ud t\\
&\quad\leq \dsp\int_{t^{(n)}}^{t^{(n+1)}}\int_\O F(\Pi_\disc  c^{(n+1)}(\x)) \Pi_\disc c^{(n+1)}(\x)  \ud \x \ud t.
\end{aligned}
\end{equation*}
Sum on $n=0,...,m-1$, for some $m=0,...,N$:
\begin{equation}\label{eq-est10}
\begin{aligned}
\frac{1}{2}\dsp\int_\O \Big[ |\Pi_\disc c^{(m)}(\x)|^2&-|\Pi_\disc c^{(0)}(\x)|^2 \Big] \ud \x
+\underline \lambda\dsp\int_{0}^{t^{(m)}}\int_\O |\nabla_\disc c(\x)|^2 \ud \x \ud t \\
&\quad\leq \dsp\int_{0}^{t^{(m)}}\int_\O F(\Pi_\disc  c(\x)) \Pi_\disc c(\x)  \ud \x \ud t.
\end{aligned}
\end{equation}
Apply the Cauchy--Schwarz inequality to the right--hand side, to obtain,  
\begin{equation*}
\begin{aligned}
\frac{1}{2}\dsp\int_\O \Big[ |\Pi_\disc c^{(m)}(\x)|^2&-|\Pi_\disc c^{(0)}(\x)|^2 \Big] \ud \x
+\underline \lambda\dsp\int_{0}^{t^{(m)}}\int_\O |\nabla_\disc c(\x)|^2 \ud \x \ud t\\
&\quad\leq || F||_{L^2(\O\times (0,T))} \; || \Pi_\disc c||_{L^2(\O\times (0,T))}.
\end{aligned}
\end{equation*}
Due to assumptions \eqref{assump-rm-3}, one has
\begin{equation*}
\begin{aligned}
\frac{1}{2}\dsp\int_\O \Big[ |\Pi_\disc c^{(m)}(\x)|^2&-|\Pi_\disc c^{(0)}(\x)|^2 \Big] \ud \x
+\underline\lambda\dsp\int_{0}^{t^{(m)}}\int_\O |\nabla_\disc c(\x,t)|^2 \ud \x \ud t\\ 
&\leq C_1 || \Pi_\disc c^{(m)} ||_{L^2(\O\times (0,T))} + C_2 || \Pi_\disc c^{(m)} ||_{L^2(\O\times (0,T))}^2.
\end{aligned}
\end{equation*}
Then using the Young's inequality, with $\varepsilon$ satisfying $\frac{1}{2}-\frac{\varepsilon}{2}+C_{F2} >0$, to the right--hand side of the above inequality, we have
\begin{equation*}
\begin{aligned}
\frac{1}{2}\dsp\int_\O \Big[ |\Pi_\disc c^{(m)}(\x)|^2&-|\Pi_\disc c^{(0)}(\x)|^2 \Big] \ud \x
+\underline\lambda\dsp\dsp\int_{0}^{t^{(m)}}\int_\O |\nabla_\disc c(\x,t)|^2 \ud \x \ud t\\
&\quad\leq \frac{C_{F1}^2}{2\varepsilon}+(\frac{\varepsilon}{2}+C_{F2}) || \Pi_\disc c^{(m)} ||_{L^\infty(0,T;L^2(\O))}^2.
\end{aligned}
\end{equation*}
Take the supremum on $m=0,...,N$ to conclude Estimate \eqref{est-rm}.
\end{proof}

\begin{corollary}\label{cor-rdm}
Let $\disc$ be a gradient discretisations. Under assumptions \eqref{assump-rm}, the gradient scheme \eqref{rm-disc-pblm} admits at least one solution $c\in X_\disc$.
\end{corollary}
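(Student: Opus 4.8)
The plan is to solve \eqref{rm-disc-pblm} step by step in time. The initialisation $c^{(0)}=J_\disc c_{\rm ini}$ is well defined because $c_{\rm ini}\in L^2(\O)$ and $J_\disc$ is a given linear map into $X_\disc$, so it suffices to show that, for each $n\in\{0,\dots,N-1\}$ and each given $c^{(n)}\in X_\disc$, the $(n+1)$-st equation of \eqref{rm-disc-pblm} has at least one solution $c^{(n+1)}\in X_\disc$; a finite induction then produces the whole family. Fixing such an $n$, I would recast this finite-dimensional problem as a zero-finding problem: since $X_\disc$ is finite dimensional and $\|\cdot\|_\disc$ is a norm on it, equip $X_\disc$ with the inner product $\langle v,w\rangle_\disc=\int_\O\Pi_\disc v\,\Pi_\disc w\ud\x+\int_\O\nabla_\disc v\cdot\nabla_\disc w\ud\x$ (whose associated norm is equivalent to $\|\cdot\|_\disc$), and let $\Phi:X_\disc\to X_\disc$ be defined through the Riesz isomorphism by
\[
\langle\Phi(v),\varphi\rangle_\disc=\int_\O\frac{\Pi_\disc(v-c^{(n)})}{\delta t^{(n+1/2)}}\Pi_\disc\varphi\ud\x+\int_\O{\bf A}(\x)\nabla_\disc v\cdot\nabla_\disc\varphi\ud\x-\int_\O F(\Pi_\disc v)\Pi_\disc\varphi\ud\x,\quad\forall\varphi\in X_\disc.
\]
Then $c^{(n+1)}$ solves the $(n+1)$-st equation iff $\Phi(c^{(n+1)})=0$, and $\Phi$ is continuous since $\Pi_\disc,\nabla_\disc$ are linear and $F$ is a polynomial.

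Next I would show that the possible zeros of $\Phi$ stay in a fixed ball, by running, at the level of a single time step, the computation from the proof of Lemma \ref{lemma-est-rm}. Taking $\varphi=v$ above, using $(\alpha-\beta)\alpha\geq\frac12(|\alpha|^2-|\beta|^2)$ on the time-difference term, the ellipticity bound ${\bf A}(\x)\geq\underline\lambda$ on the diffusion term, and the growth bound in \eqref{assump-rm-3} together with the Cauchy--Schwarz and Young inequalities on the reaction term, one obtains an estimate of the form
\[
\langle\Phi(v),v\rangle_\disc\ \geq\ \Big(\tfrac{1}{2\delta t^{(n+1/2)}}-C_{F2}-\tfrac{\varepsilon}{2}\Big)\,\|\Pi_\disc v\|_{L^2(\O)}^2+\underline\lambda\,\|\nabla_\disc v\|_{L^2(\O)^d}^2-C,
\]
where $\varepsilon>0$ is free and $C$ does not depend on $v$. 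Choosing $\varepsilon$ small and assuming $\delta t^{(n+1/2)}$ small enough that the bracket is positive (a restriction that is in any case needed for stability), and using $\|\Pi_\disc v\|_{L^2(\O)}^2+\|\nabla_\disc v\|_{L^2(\O)^d}^2\geq\frac12\|v\|_\disc^2$, the right-hand side tends to $+\infty$ as $\|v\|_\disc\to\infty$; hence there is $R>0$, depending only on $\|\Pi_\disc c^{(n)}\|_{L^2(\O)}$ and the data, with $\langle\Phi(v),v\rangle_\disc>0$ for every $v$ such that $\|v\|_\disc=R$.

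Finally I would invoke the standard corollary of Brouwer's fixed point theorem: a continuous self-map $\Phi$ of a finite-dimensional inner-product space with $\langle\Phi(v),v\rangle>0$ on the sphere of radius $R$ must vanish at some point of the closed ball of radius $R$ — equivalently one can run a topological degree argument, homotoping $\Phi$ to the linear isomorphism given by the discrete time-derivative and diffusion terms and checking that the a priori bound persists along the homotopy. This yields $c^{(n+1)}$, completing the induction. I expect the a priori estimate to be the only delicate point: the reaction term has linear growth with constant $C_{F2}$, which competes with the coercivity coming from the discrete time derivative and the anisotropic diffusion, so it must be absorbed by a carefully tuned Young inequality (exactly as in Lemma \ref{lemma-est-rm}) and, when $C_{F2}$ is not small, the smallness requirement on $\delta t^{(n+1/2)}$ has to be kept track of; everything else — continuity of $\Phi$, the Riesz reformulation, and the fixed-point/degree step — is routine in finite dimension.
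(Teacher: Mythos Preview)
Your proof is correct and takes a genuinely different route from the paper's. The paper proceeds by \emph{linearisation}: for fixed $w\in X_\disc$ it freezes the reaction term as $F(\Pi_\disc w)$, observes that the resulting linear system is square and coercive (hence uniquely solvable) for any $\delta t$, defines the solution map $T:w\mapsto c$, and then invokes Brouwer's fixed-point theorem for $T$. You instead keep the full nonlinear operator $\Phi$ and use the ``acute-angle'' corollary of Brouwer (equivalently, a degree argument) after establishing $\langle\Phi(v),v\rangle_\disc>0$ on a large sphere. Both are standard finite-dimensional existence devices; your approach is more direct and makes the a~priori bound completely explicit, while the paper's approach cleanly separates linear well-posedness from the nonlinear step. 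One point worth noting: the time-step restriction $\frac{1}{2\delta t^{(n+1/2)}}>C_{F2}$ that you flag is not part of the corollary's hypotheses, yet it (or an equivalent smallness condition) is in fact also needed in the paper's argument to ensure that $T$ maps some ball into itself --- the paper simply does not spell this out when appealing to Brouwer. So your restriction is not a defect of your method relative to the paper's; it is an honest acknowledgement of a condition both proofs require.
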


\begin{proof}
At each time step $n+1$, \eqref{rm-disc-pblm} describes square non linear equations on $c^{(n+1)}$. For a given $w \in X_\disc$, $c\in X_\disc$ is a solution to the linear square system
\begin{equation}\label{rm-disc-pblm-lin}
\begin{aligned}
&\dsp\int_\O \Pi_{\disc}\dsp\frac{c^{(n+1)}-c^{(n)}}{\delta t^{(n+\frac{1}{2})}}(\x) \Pi_\disc \varphi(\x)
+ \dsp\int_\O {\bf A}(\x)\nabla_\disc c^{(n+1)}(\x) \cdot \nabla_\disc \varphi(\x)\ud \x\\
&\qquad=\dsp\int_\O F(\Pi_\disc w) \Pi_\disc \varphi(\x) \ud \x, \quad \forall \varphi \in X_\disc.
\end{aligned}
\end{equation}

Using arguments similar to the proof of Lemma \ref{lemma-est-rm}, we can obtain
\begin{equation*}
\begin{aligned}
&|| \Pi_\disc c^{(n+1)} ||_{L^2(\O)} + || \nabla_\disc c^{(n+1)} ||_{L^2(\O)^d} \leq
C|| F ||_{L^2(\O)} + || \Pi_{\disc} c^{(n)}||_{L^2(\O)}.
\end{aligned}
\end{equation*}
where $C$ not depending on $c^{(n+1)}$. This shows that the kernel of the matrix built from the linear system only has the zero vector. The matrix is therefore invertible. We then can define the mapping $T:X_\disc \to X_\disc$ by $T(w)=c$ with $c$ is the solution to \eqref{rm-disc-pblm-lin}. Since $T$ is continuous, Brouwer's fixed point establishes the existence of a solution $c^{(n+1)}$ to the system at time step $n + 1$. 
\end{proof}
To reach the standard compactness, we need to establish a bound on the discrete time derivative. The space $\Pi_\disc(X_\disc) \subset L^2(\O)$ is therefore equipped with a dual norm defined as follows.
\begin{definition}[Dual norm on $\Pi_\disc(X_\disc)$]\label{def-dual}
Let $\disc$ be a gradient discretisations. The dual norm $|| \cdot ||_{\star,\disc}$ on $\Pi_\disc(X_\disc)$ is defined by
\begin{equation}\label{eq-dual}
\forall w\in \Pi_\disc(X_\disc),\;
|| w ||_{\star,\disc}=\dsp\sup\Big\{ \dsp\int_\O w(\x)\Pi_\disc \varphi(\x)\ud \x \; :\; \varphi\in X_\disc, || \varphi ||_\disc =1
\Big\}. 
\end{equation}
\end{definition}

\begin{lemma}[Estimate on the dual norm of $\delta_\disc c$]
\label{lemma-est-dual}
Assume \eqref{assump-rm} holds and let $\disc$ be a gradient discretisations. If $c \in X_\disc$ is a solution to the gradient scheme \eqref{rm-disc-pblm}, then there exists a constant $C_2$ depending only on $C_1$, $C_{F1}$, $C_{F2}$ and $|| \Pi_\disc c^{(0)} ||_{L^2(\O)}$, such that
\begin{equation}\label{eq-est-dual}  
\dsp\int_0^T || \delta_\disc c(t) ||_{\star,\disc}^2 \ud t \leq C_2,
\end{equation}
where the dual norm $|| \cdot ||_{\star,\disc}$ is defined by \eqref{eq-dual}.
\end{lemma}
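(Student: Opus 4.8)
The plan is to test the gradient scheme \eqref{rm-disc-pblm} with an arbitrary $\varphi \in X_\disc$, isolate the discrete time-derivative term, and bound the remaining terms using the $L^2$--coercivity of ${\bf A}$, the Cauchy--Schwarz inequality, and the already-established energy estimate \eqref{est-rm}. First, fix $n \in \{0,\dots,N-1\}$ and take any $\varphi \in X_\disc$ with $\|\varphi\|_\disc = 1$ in \eqref{rm-disc-pblm}. Rearranging gives
\begin{equation*}
\dsp\int_\O \delta_\disc^{(n+\frac12)}c(\x)\,\Pi_\disc\varphi(\x)\ud\x
= \dsp\int_\O F(\Pi_\disc c^{(n+1)}(\x))\,\Pi_\disc\varphi(\x)\ud\x
- \dsp\int_\O {\bf A}(\x)\nabla_\disc c^{(n+1)}(\x)\cdot\nabla_\disc\varphi(\x)\ud\x.
\end{equation*}
For the diffusion term, $|{\bf A}(\x)\nabla_\disc c^{(n+1)}\cdot\nabla_\disc\varphi| \le \overline\lambda\,|\nabla_\disc c^{(n+1)}|\,|\nabla_\disc\varphi|$, so by Cauchy--Schwarz it is bounded by $\overline\lambda\,\|\nabla_\disc c^{(n+1)}\|_{L^2(\O)^d}\,\|\nabla_\disc\varphi\|_{L^2(\O)^d} \le \overline\lambda\,\|\nabla_\disc c^{(n+1)}\|_{L^2(\O)^d}$ using $\|\varphi\|_\disc = 1$. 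For the reaction term, the growth bound in \eqref{assump-rm-3} gives $\|F(\Pi_\disc c^{(n+1)})\|_{L^2(\O)} \le C_{F1}|\O|^{1/2} + C_{F2}\|\Pi_\disc c^{(n+1)}\|_{L^2(\O)}$, and then Cauchy--Schwarz together with $\|\Pi_\disc\varphi\|_{L^2(\O)} \le \|\varphi\|_\disc = 1$ bounds it by the same quantity.

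Taking the supremum over $\varphi \in X_\disc$ with $\|\varphi\|_\disc = 1$ yields, by Definition \ref{def-dual},
\begin{equation*}
\| \delta_\disc^{(n+\frac12)}c \|_{\star,\disc}
\le \overline\lambda\,\|\nabla_\disc c^{(n+1)}\|_{L^2(\O)^d}
+ C_{F1}|\O|^{1/2} + C_{F2}\|\Pi_\disc c^{(n+1)}\|_{L^2(\O)}.
\end{equation*}
Next I would square both sides (using $(a+b+c)^2 \le 3(a^2+b^2+c^2)$), multiply by $\delta t^{(n+\frac12)}$, and sum over $n = 0,\dots,N-1$. Since $\delta_\disc c(t) = \delta_\disc^{(n+\frac12)}c$ on $(t^{(n)},t^{(n+1)}]$, the left-hand side becomes exactly $\int_0^T \|\delta_\disc c(t)\|_{\star,\disc}^2\ud t$. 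On the right-hand side, $\sum_n \delta t^{(n+\frac12)}\|\nabla_\disc c^{(n+1)}\|_{L^2(\O)^d}^2 = \|\nabla_\disc c\|_{L^2(\O\times(0,T))^d}^2 \le C_1^2$ by \eqref{est-rm}; similarly $\sum_n \delta t^{(n+\frac12)}\|\Pi_\disc c^{(n+1)}\|_{L^2(\O)}^2 \le T\,\sup_{t}\|\Pi_\disc c(t)\|_{L^2(\O)}^2 \le T\,C_1^2$; and $\sum_n \delta t^{(n+\frac12)} C_{F1}^2|\O| = T\,C_{F1}^2|\O|$. Collecting these gives \eqref{eq-est-dual} with $C_2 = 3\big(\overline\lambda^2 C_1^2 + C_{F1}^2|\O|T + C_{F2}^2 C_1^2 T\big)$, which depends only on the stated quantities.

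I do not expect a genuine obstacle here; the only point requiring a little care is the bookkeeping that identifies $\int_0^T\|\delta_\disc c(t)\|_{\star,\disc}^2\ud t$ with the Riemann-type sum $\sum_n \delta t^{(n+\frac12)}\|\delta_\disc^{(n+\frac12)}c\|_{\star,\disc}^2$, which is immediate from the piecewise-constant-in-time definition of $\delta_\disc c$, and checking that $\delta_\disc^{(n+\frac12)}c \in \Pi_\disc(X_\disc)$ so that the dual norm in Definition \ref{def-dual} is actually applicable --- this holds because $\delta_\disc^{(n+\frac12)}c = \Pi_\disc\big((c^{(n+1)}-c^{(n)})/\delta t^{(n+\frac12)}\big)$ by definition. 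Everything else is Cauchy--Schwarz and the coercivity/growth bounds already recorded in \eqref{assump-rm}, combined with the a priori bound \eqref{est-rm}.
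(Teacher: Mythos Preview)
Your proposal is correct and follows essentially the same approach as the paper: test the scheme with an arbitrary $\varphi$, bound the diffusion term via the upper bound $\overline\lambda$ on ${\bf A}$ and Cauchy--Schwarz, bound the reaction term via the linear growth assumption \eqref{assump-rm-3}, take the supremum over $\|\varphi\|_\disc=1$, then square, multiply by $\delta t^{(n+\frac12)}$, sum, and invoke \eqref{est-rm}. Your write-up is in fact more explicit than the paper's (which compresses the squaring and summation into a single sentence), and your bookkeeping remarks about $\delta_\disc^{(n+\frac12)}c\in\Pi_\disc(X_\disc)$ and the identification of the time integral with the Riemann sum are correct and worth including.
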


\begin{proof}
Take $\varphi =w$ in \eqref{rm-disc-pblm}. Use the Cauchy--Schwarz inequality to get, thanks to assumptions \eqref{assump-rm}
\[
\begin{aligned}
\dsp\int_\O \delta_\disc^{(n+\frac{1}{2})} &c(\x) \Pi_\disc w(\x) \ud \x\\
&\quad\leq \overline \lambda || \nabla_\disc c^{(n+1)} ||_{L^2(\O\times(0,T))^d} || \nabla_\disc w ||_{L^2(\O\times(0,T))^d}\\
&\qquad+ \Big( C_{F1}+C_{F2} || \Pi_\disc c^{(n+1)} ||_{L^2(\O\times(0,T))} \Big) || \Pi_\disc w||_{L^2(\O)}\\
&\quad\leq || w ||_\disc \Big( \overline \lambda || \nabla_\disc c^{(n+1)} ||_{L^2(\O\times(0,T))^d} + C_{F1}+C_{F2} || \Pi_\disc c^{(n+1)} ||_{L^2(\O\times(0,T))} \Big).
\end{aligned}
\]
The conclusion then follows from taking the supremum over $w\in X_\disc$ with $|| w ||_\disc=1$, multiplying by $\delta t^{(n+1)}$, summing over $n=0,...,N-1$ and Estimate \eqref{est-rm}.

\end{proof}

\subsection*{Proof of Theorem \ref{th-mr}}
The proof relies on the compactness arguments as in \cite{30}, and is divided into four stages.\\

\noindent {\bf Step 1: Compactness results.}
Given that Estimate \eqref{est-rm} and the two properties of the gradient discretisations (consistency and the limit--conformity), \cite[Lemma 4.8]{30} shows that there exists $\bar c \in L^2(0,T;H^1(\O))$, such that, up to a subsequence, $\Pi_{\disc_m}c_m \to \bar c$ weakly in $L^2(0,T,L^2(\O))$, and $\nabla_{\disc_m}c_m \to \nabla\bar c$ weakly in $L^2(0,T,L^2(\O)^d)$. Estimate \eqref{eq-est-dual} together with the consistency, limit--conformity and compactness, \cite[Theorem 4.14]{30} proves that, in fact,  the convergence of $\Pi_{\disc_m}c_m$ to $\bar c$ is strong in $L^2(\O\times(0,T))$.
\vskip 1pc
\noindent {\bf Step 2: Convergence of the scheme.} We show that $\bar c$ mentioned in the first step is solution to the continuous problem. Take $\bar\psi \in L^2(0,T;L^2(\O))$ such that $\partial_t \bar \psi \in L^2(\O\times(0,T))$ and $\bar\psi(T,\cdot)=0$. The interpolation results in \cite[Lemma 4.10]{30} provides $w_m=(w_m^{(n)})_{n=0,...,N_m} \in X_{\disc_m}^{N_m+1}$, such that $\Pi_{\disc_m}w_m \to \bar\psi$ in $L^2(0,T;L^2(\O))$ and $\delta_{\disc_m}w_m \to \partial_t \bar\psi$ strongly in $L^2(\O\times(0,T))$. Set $\psi=\delta t_m ^{(n+\frac{1}{2})}w_m^{(n)}$ as a test function in Scheme \eqref{rm-disc-pblm} and sum on $n=0,...,N_m-1$ to get
\begin{equation}\label{eq-part1}
\begin{aligned}
\dsp\sum_{n=0}^{N_m-1}\dsp\int_\O [ \Pi_{\disc_m}c_m^{(n+1)}(\x)&-\Pi_{\disc_m}c^{(n)}(\x) ]\Pi_{\disc_m}w_m^{(n)}(\x) \ud x\\
&\quad+\dsp\int_0^T\dsp\int_\O {\bf A}(\x)\nabla_{\disc_m}c_m(\x,t) \cdot \nabla_{\disc_m}w_m(\x,t) \ud \x \ud t\\
&=\dsp\int_0^T\dsp\int_\O F(\Pi_{\disc_m}c(\x,t))\Pi_{\disc_m}w_m(\x,t) \ud x \ud t.
\end{aligned}
\end{equation}
Apply the discrete integration by part formula \cite[Eq. (D.15)]{30} to the right hand side in the above equality and use the fact $w^{(N)}=0$ to obtain 
\begin{equation*}
\begin{aligned}
-\dsp\int_0^T \dsp\int_\O \Pi_{\disc_m}c_m(\x,t)&\delta_{\disc_m}w_m(\x,t) \ud \x \ud t 
-\int_\O \Pi_{\disc_m}c_m^{(0)}(\x)\Pi_{\disc_m}w_m^{(0)}(\x) \ud \x\\
&+\dsp\int_0^T\dsp\int_\O {\bf A}(\x)\nabla_{\disc_m}c_m(\x,t) \cdot \nabla_{\disc_m}w_m(\x,t) \ud \x \ud t\\
&\quad=\dsp\int_0^T\dsp\int_\O F(\Pi_{\disc_m}c(\x,t))\Pi_{\disc_m}w_m(\x,t) \ud x \ud t.
\end{aligned}
\end{equation*}
Thanks to the strong convergence of $\Pi_{\disc_m}c_m$ and \eqref{assump-rm-3}, the dominated convergence theorem implies that $F(\Pi_{\disc_m}c_m) \to F(\bar c)$ in $L^2(\O\times(0,T))$. By the consistency, $\Pi_{\disc_m}c_m^{(0)}=\Pi_{\disc_m}J_{\disc_m}c_{\rm ini} \to c_{\rm ini}$ in $L^2(\O)$. Now, we can pass to the limit $m\to \infty$ in each of the terms above to see that $\bar c$ is a solution to \eqref{rm-weak}, thanks again to the weak and strong convergence established previously.
\vskip 1pc
\noindent {\bf Step 3: Convergence of $\Pi_{\disc_m}c_m$ in $L^\infty(0,T;L^2(\O))$.} For $s\in [0,T]$, let $(s_m)_{m\in\NN} \subset [0,T]$ be a sequence such that $s_m \to s$, as $m \to \infty$. Let $k(m) \in \{ 0,..., N_m-1\}$ such that $s_m 
\in (t^{(k(m))}, t^{(k(m)+1)}]$. As followed in the proof of Lemma \ref{lemma-est-rm}, we obtain as the discrete estimate \eqref{eq-est10} with $\disc_m$ and $k_m$,
\begin{equation}\label{eq-prf-thm1}
\begin{aligned}
\frac{1}{2}\dsp\int_\O &(\Pi_{\disc_m} c(\x,s_m))^2 \ud \x\\
&\leq \frac{1}{2}\dsp\int_\O (\Pi_{\disc_m} J_{\disc_m}c_{\rm ini}(\x))^2 \ud x 
- \dsp\int_0^{t^{(k(m))}}\dsp\int_\O {\bf A}(\x)(\nabla_{\disc_m}c(\x,t))^2 \ud \x \ud t\\
&\leq \dsp\int_0^{t^{(k(m))}}\dsp\int_\O F(\Pi_{\disc_m}c(\x,t)) \Pi_{\disc_m}c(\x,t) \ud \x \ud t.
\end{aligned}
\end{equation}
Let $\chi_I$ be the characteristic function of $I$. Now, as $m \to \infty$, we have
\begin{equation*}
\begin{aligned}
&\Pi_{\disc_m}c_m \to \bar c \mbox{ strongly in } L^2(\O \times (0,T)), \mbox{ and,}\\
&\chi_{[0,t^{(k(m))}]}\nabla\bar c \to \chi_{[0,s]} \nabla\bar c \mbox{ strongly in } L^2(\O \times (0,T))^d.
\end{aligned}
\end{equation*}
It is obvious to write
\begin{equation*}
\begin{aligned}
\dsp\int_0^s\dsp\int_\O &{\bf A}(\x)(\nabla\bar c(\x,t))^2 \ud \x \ud t\\
&=\dsp\int_0^{t^{(k(m))}}\dsp\int_\O \chi_{[0,s]}{\bf A}(\x)(\nabla\bar c(\x,t))^2 \ud \x \ud t\\
&=\dsp\lim_{m\to \infty}\dsp\int_0^T\dsp\int_\O \chi_{[0,t^{(k(m))}]}{\bf A}(\x)\nabla\bar c(\x,t) \cdot \nabla_{\disc_m}c_m(\x,t) \ud \x \ud t\\
&\leq \dsp\liminf_{m\to \infty}\Big( || \chi_{[0,t^{(k(m))}]}\nabla\bar c ||_{L^2(\O\times(0,T))^d} \cdot || \chi_{[0,t^{(k(m))}]}{\bf A}(\x)\nabla_{\disc_m}c_m ||_{L^2(\O\times(0,T))^d} \Big)\\
&= || \chi_{[0,s]}\nabla\bar c ||_{L^2(\O\times(0,T))^d} \cdot \dsp\liminf_{m\to \infty} || \chi_{[0,t^{(k(m))}]}{\bf A}(\x)\nabla_{\disc_m}c_m ||_{L^2(\O\times(0,T))^d}. 
\end{aligned}
\end{equation*}
Dividing by $\| \chi_{[0,s]}\nabla\bar c \|_{L^2(\O\times(0,T))^d}$ leads to
\begin{equation}\label{eq-liminf1}
\begin{aligned}
\dsp\int_0^s\dsp\int_\O {\bf A}(\x)(\nabla\bar c(\x,t))^2 &\ud \x \ud t \\&\leq \dsp\liminf_{m\to\infty}\dsp\int_0^{t^{(k(m))}}\dsp\int_\O {\bf A}(\x)( \nabla_{\disc_m}u_m(\x,t))^2 \ud \x \ud t.
\end{aligned}
\end{equation}

Combined with passing to limit superior in \eqref{eq-prf-thm1}, this gives
\begin{equation}\label{eq-20}
\begin{aligned}
\dsp\limsup_{m\to \infty}\frac{1}{2}\dsp\int_\O (\Pi_{\disc_m}&c_m(\x,s_m))^2 \ud \x\\
&\leq \frac{1}{2}\dsp\int_\O c_{\rm ini}(\x)^2 \ud \x
-\dsp\int_0^s\dsp\int_\O {\bf A}(\x) (\nabla\bar u(\x,t))^2 \ud \x \ud t\\
&\quad+\dsp\int_0^s\dsp\int_\O F(\bar c(\x,t)) \bar c(\x,t)
 \ud \x \ud t.
\end{aligned}
\end{equation}

Plugging $\varphi=\bar c\chi_{[0,s]}(t)$ in Problem \eqref{rm-weak} and integrating by part, one has
\begin{equation}\label{eq-multline-1}
\begin{aligned}
\frac{1}{2}\dsp\int_\O (\bar c(\x,s))^2 \ud \x &+ \dsp\int_0^s\dsp\int_\O {\bf A}(\x)(\nabla \bar c(\x,t))^2 \ud \x \ud t\\
&\quad=\frac{1}{2}\dsp\int_\O c_{\rm ini}(\x)^2 \ud \x
+\dsp\int_0^s\dsp\int_\O F(\bar c(\x,t)) \bar c(\x,t) \ud \x \ud t.
\end{aligned}
\end{equation}
Putting it all together (\eqref{eq-20}, \eqref{eq-multline-1}), we have
\begin{equation}\label{eq-limsup-3}
\dsp\limsup_{m\to \infty}\dsp\int_\O (\Pi_{\disc_m}c_m(\x,s_m))^2 \ud \x
\leq \dsp\int_\O \bar c(\x,s)^2 \ud \x.
\end{equation}

\cite[Theorem 4.19]{30} and Estimates \eqref{est-rm} and \eqref{eq-est-dual} states that $(\Pi_{\disc_m}c_m)_{m\in\NN}$ converges to $\bar c$ weakly in $L^2(\O)$ uniformly in $[0,T]$. Hence, $\Pi_{\disc_m}c_m(\cdot,s_m)$ converges to $\bar c(\cdot,s)$ weakly in $L^2(\O)$, as $m \to \infty$. Estimate \eqref{eq-limsup-3} with basic justifications in Hilbert, this convergence strongly holds in $L^2(\O)$. Since $\bar c:[0,T] \to L^2(\O)$ is continuous, apply \cite[Lemma C.13]{30} to conclude the proof. 

\vskip 1pc
\noindent {\bf Step 4: Strong Convergence of $\nabla_{\disc_m}c_m$.} Since the non linearity does not act on gradients, the proof can be obtained as in \cite{YJ2014} without additional assumptions. It can be written
\begin{equation}\label{es-eq}
\begin{aligned}
\dsp\int_0^T\dsp\int_\O (\nabla_{\disc_m}c_m(\x,t)&- \nabla\bar c(\x,t))\cdot(\nabla_{\disc_m}c_m(\x,t)- \nabla\bar c(\x,t)) \ud \x \ud t\\
&=\dsp\int_0^T\dsp\int_\O \nabla_{\disc_m}c_m(\x,t) \cdot \nabla_{\disc_m}c_m(\x,t) \ud \x \ud t\\
&\quad-\dsp\int_0^T\dsp\int_\O \nabla_{\disc_m}c_m(\x,t) \cdot \nabla\bar c(\x,t) \ud \x \ud t\\
&\quad-\dsp\int_0^T\dsp\int_\O \nabla\bar c(\x,t) \cdot (\nabla_{\disc_m}c_m(\x,t)-\nabla\bar c(\x,t)) \ud \x \ud t.
\end{aligned}
\end{equation}
Take $\bar\varphi:=c_m$ in Scheme \eqref{rm-disc-pblm}. We can pass to the limit superior, to obtain, thanks to choosing $\varphi = \bar c$ in \eqref{rm-weak} and ${\bf A}(\x)={\bf Id}$ in the continuous and discrete problems
\begin{equation*}
\begin{aligned}
\dsp\limsup_{m\to \infty}\dsp\int_0^T\dsp\int_\O \nabla_{\disc_m}&c_m(\x,t)\cdot \nabla_{\disc_m}c_m(\x,t) \ud x \ud t\\
&=\dsp\int_0^T\dsp\int_\O F(\bar c)\bar c(\x,t) \ud \x \ud t
-\dsp\int_0^T\dsp\int_\O \partial_t \bar c(\x,t) \bar c(\x,t) \ud \x \ud t
\\
&=\dsp\int_0^T\dsp\int_\O \nabla\bar c(\x,t)\cdot \nabla\bar c(\x,t)\ud \x \ud t.
\end{aligned}
\end{equation*}
This relation and the weak convergence of $\nabla_{\disc_m} c_m$ enable to pass to the limit in \eqref{es-eq} to complete the proof.

\section{The Case Of non homogenous Dirichlet boundary condition}\label{sec-fn}

\subsection{Continuous Setting} 
Anisotropic reaction diffusion equations with non homogenous Dirichlet boundary conditions have various applications appearing in nerve conduction, biophysics, ecology and clinical medicine \cite{new-50}, for example. We consider here the following model:
\begin{subequations}\label{fn-problem-rm}
\begin{align}
\partial_t \bar c-\div\big({\bf A}(\x,t)\nabla \bar c \big)&=F(\bar c) \mbox{ in } \O\times (0,T),\label{fn-rm-strong1}\\
\bar c(\x,t)&=g(\x,t)   \mbox{ on } \partial\O\times (0,T), \label{fn-rm-strong2}\\
\bar c(\cdot,0)&=c_{\rm ini} \mbox{ on } \O, \label{fn-rm-strong3}
\end{align}
\end{subequations}
where $\O$, $F$ and $c_{\rm ini}$ are as in Section \ref{sec-mr}. The diffusion tensor ${\bf A}(\x,t)$ is positive definite and $g$ is a trace of a function in $L^2(0,T;H^1(\O))$ whose time derivative is in $L^2(0,T;H^{-1}(\O))$. 

Take $w \in L^2(0,T;H^1(\O))$ such that the trace of $w$ is the function $g$. The weak solution of \eqref{fn-problem-rm} is seeking $\hat c=\bar c-w$ satisfying
\begin{equation}\label{fn-weak}
\left.
\begin{aligned}
&\widehat c \in L^2(0,T;H_0^1(\O)) \cap C([0,T];L^2(\O));\;
\partial_t \widehat c \in L^2(0,T;H^{-1}(\O)),\;
\bar c(\cdot,0)=c_{\rm ini}\\ 
&\dsp\int_0^T \langle \partial_t\widehat c(\x,t),\bar\varphi(\x,t) \rangle_{H^{-1},H^1}\ud t
+\dsp\int_0^T\int_\O {\bf A}(\x,t)\nabla \widehat{c}(\x,t) \cdot \nabla \bar\varphi(\x,t)\ud \x \ud t\\
{}&\quad\quad\quad\quad\quad= \dsp\int_0^T\dsp\int_\O F(\widehat c(\x,t))\bar\varphi(\x,t) \ud \x \ud t,\quad \forall \bar\varphi \in L^2(0,T;H_0^1(\O)).
\end{aligned}
\right.
\end{equation}

\subsection{Discrete Setting}
We give here the approximate scheme of the considered problem and its convergence results together with examples of particular numerical schemes.
      
\begin{definition}\label{fn-def-gd-rm}
Let $\O$ be an open subset of $\RR^d$ (with $d \geq 1$) and $T>0$. A gradient discretisations for time--dependent problem including a non homogenous Dirichlet boundary conditions is $\disc=(X_{\disc},\mathcal I_{\disc,\dr}, \Pi_\disc, \nabla_\disc, J_\disc, (t^{(n)})_{n=0,...,N}) )$, where
\begin{itemize}
\item the set of discrete unknowns $X_{\disc}=X_{\disc,0}\oplus X_{\disc,\dr\O}$ is the direct sum of two finite dimensional spaces on $\RR$, corresponding respectively to the interior unknowns and to the boundary unknowns,
\item the linear mapping $\mathcal I_{\disc,\dr}: H^{\frac{1}{2}}(\dr\O) \to X_{\disc,\dr\O}$ is an interpolation operator for the trace,
\item the reconstructed function $\Pi_\disc$, reconstructed gradient $\nabla_\disc$, the interpolation operator $J_\disc$ and the discrete time steps $(t^{(n)})_{n=0,...,N}$ are as in Definition \ref{def-gd-rm}, such that the discrete gradient must be chosen so that $\| \cdot ||_\disc := || \nabla_\disc \cdot ||_{L^2(\O)^d}$ defines a norm on $X_{\disc,0}$.
\end{itemize}
\end{definition}


The accuracy of this gradient discretisations is measured through the three properties, consistency, limit--conformity and compactness, defined below.
     
\begin{definition}[Consistency]\label{fn-def:cons-rm}
For $\varphi\in H^1(\O)$, define
$S_{\mathcal{D}} : H^1(\O)\to [0, +\infty)$ by
\begin{equation}
\begin{aligned}
S_{\mathcal{D}}(\varphi)= 
\min\Big\{\| \Pi_{\mathcal{D}} w - \varphi \|_{L^{2}(\Omega)}
+ \| \nabla_{\mathcal{D}} w - \nabla \varphi \|_{L^{2}(\Omega)^{d}}\;:\;
w\in X_\disc,\\ 
\mbox{ such that } w-\mathcal I_{\disc,\dr}\gamma\varphi \in X_{\disc,0}\Big\},
\end{aligned}
\end{equation}
where $\gamma$ is the trace operator acting on functions in $L^2(0,T;H^1(\O))$.

A sequence $(\mathcal{D}_{m})_{m \in \mathbb{N}}$ of gradient discretisations in the sense of Definition \ref{fn-def-gd-rm} is \emph{consistent} if, as $m \to \infty$ 
\begin{itemize}
\item for all $\varphi \in H^1(\O)$, $S_{\disc_m}(\varphi) \to 0$,
\item for all $\varphi \in L^2(\O)$, $\Pi_{\disc_m}J_{\disc_m}\varphi \to \varphi$ strongly in $L^2(\O)$,
\item $\delta t_{\disc_m} \to 0$.
\end{itemize}
\end{definition}

\begin{definition}[Limit--conformity]\label{fn-def:lconf-rm}
For $\bpsi \in \overline H_{\rm div}$, define $W_{\mathcal{D}} : \overline H_{\rm div} \to [0, +\infty)$ by
\begin{equation}\label{long-rm}
W_{\mathcal{D}}(\bpsi)
 = \sup_{w\in X_{\disc,0}\setminus \{0\}}\frac{\Big|\dsp\int_{\Omega}(\nabla_{\mathcal{D}}w\cdot \bpsi + \Pi_{\mathcal{D}}w \div (\bpsi)) \ud \x \Big|}{|| w||_\disc }.
\end{equation}
where $\overline H_{\rm div}=\{\bpsi \in L^2(\O)^d\;:\; {\rm div}\bpsi \in L^2(\O)\}$.
 
A sequence $(\disc_m)_{m\in \NN}$ of gradient discretisations in the sense of Definition \ref{fn-def-gd-rm} is \emph{limit-conforming} if for all $\bpsi \in \overline H_{\rm div}$, $W_{\disc_m}(\bpsi) \to 0$, as $m \to \infty$.
\end{definition}

\begin{definition}[Compactness]\label{fn-def:compact}
A sequence of gradient discretisations $(\disc_m)_{m\in\NN}$ in the sense of Definition \ref{fn-def-gd-rm} is \emph{compact} if for any sequence $(\varphi_m )_{m\in\NN} \in X_{\disc_m}$, such that $(|| \varphi_m ||_{\disc_m})_{m\in \NN}$ is bounded, the sequence $(\Pi_{\disc_m}\varphi_m )_{m\in \NN}$ is relatively compact in $L^2(\O)$.
\end{definition}


If $\disc$ is a gradient discretisations in the sense of Definition \eqref{fn-def-gd-rm}, the corresponding gradient scheme is given by
\begin{equation}\label{fn-disc-pblm}
\begin{aligned}
&\mbox{find a family $(c^{(n)})_{n=0,...,N} \in \mathcal I_{\disc,\dr}g+ X_{\disc,0}^{N+1}$, $c^{(0)}=J_\disc c_{\rm ini}$, }\\
&\mbox{ and for all $n=0,...,N-1$, $c^{(n+1)}$ satisfies}\\
&\dsp\int_\O \delta_\disc^{(n+\frac{1}{2})} c(\x) \Pi_\disc \varphi(\x)
+ \dsp\int_\O {\bf A}(\x,t) \nabla_\disc c^{(n+1)}(\x) \cdot \nabla_\disc \varphi(\x)\ud \x\\
&\qquad=\dsp\int_\O F(\Pi_\disc c^{(n+1)}(\x))\Pi_\disc \varphi(\x) \ud \x, \quad \forall \varphi \in X_{\disc,0}.
\end{aligned}
\end{equation}

\begin{theorem}\label{fn-th-mr}
Let assumptions \eqref{assump-rm} hold and let $(\disc_m)_{m\in\NN}$ be a sequence of gradient discretisations in the sense of Definition \ref{fn-def-gd-rm}, that is consistent, limit-conforming and compact in the sense of Definition \ref{fn-def:cons-rm}, \ref{fn-def:lconf-rm} and \ref{fn-def:compact}. For $m \in \NN$, let $c_m$ be a solution to the gradient scheme \eqref{fn-disc-pblm} with $\disc=\disc_m$. Then there exists a weak solution $\widehat c$ of \eqref{fn-weak} and a subsequence of gradient discretisations, still denoted by $(\disc_m)_{m\in\NN}$, such that, as $m \to \infty$,
\begin{subequations}
\label{eq-thm-conv}
\begin{align}
\dsp\sup_{t\in[0,T]}\| \Pi_{\disc_m}c_m(t) - \widehat c(t) \|_{L^2(\O)} \to 0,\\
|| \nabla_{\disc_m}c_m - \nabla\widehat c ||_{L^2(\O \times (0,T))^d} \to 0.
\end{align}
\end{subequations}
\end{theorem}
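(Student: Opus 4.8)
The plan is to follow closely the architecture of the proof of Theorem \ref{th-mr}, the only genuinely new ingredient being the reduction of the non homogeneous boundary condition to a homogeneous one by a \emph{discrete} lifting of the data $g$. First I would fix a lifting $w\in L^2(0,T;H^1(\O))$ with $\partial_t w\in L^2(0,T;H^{-1}(\O))$ and $\gamma w=g$, and use the consistency of $(\disc_m)_{m\in\NN}$ (Definition \ref{fn-def:cons-rm}) together with the Dirichlet version of the time-dependent interpolation lemma of \cite{30} (the analogue of \cite[Lemma 4.10]{30}) to produce a family $P_m=(P_m^{(n)})_{n=0,\dots,N_m}\in X_{\disc_m}^{N_m+1}$ with $P_m^{(n)}-\cI_{\disc_m,\dr}(\gamma w)(\cdot,t^{(n)})\in X_{\disc_m,0}$ and such that $\Pi_{\disc_m}P_m\to w$ in $L^2(\O\times(0,T))$, $\nabla_{\disc_m}P_m\to\nabla w$ in $L^2(\O\times(0,T))^d$, and $\delta_{\disc_m}P_m\to\partial_t w$ in the relevant dual sense (tested against $\Pi_{\disc_m}\varphi$, $\varphi\in X_{\disc_m,0}$). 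Writing $\widehat c_m^{(n)}:=c_m^{(n)}-P_m^{(n)}\in X_{\disc_m,0}$ turns \eqref{fn-disc-pblm} into a gradient scheme for $\widehat c_m$ posed on $X_{\disc_m,0}$, with two extra, controlled source terms built from $\delta_{\disc_m}P_m$ and $\nabla_{\disc_m}P_m$; the sought continuous limit is $\bar c=\widehat c+w$, where $\widehat c\in L^2(0,T;H_0^1(\O))$ is the function that will solve \eqref{fn-weak}.

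Next I would establish the discrete energy estimates. Choosing $\varphi=\delta t_m^{(n+\frac12)}\widehat c_m^{(n+1)}\in X_{\disc_m,0}$ in the reduced scheme and repeating the computation of Lemma \ref{lemma-est-rm} — the inequality $(\alpha-\beta)\alpha\geq\tfrac12(|\alpha|^2-|\beta|^2)$, uniform coercivity of ${\bf A}$, Cauchy--Schwarz, the growth bound \eqref{assump-rm-3}, and Young's inequality, the boundary contributions being absorbed using the bounds on $\|\nabla_{\disc_m}P_m\|_{L^2}$ and on the dual norm of $\delta_{\disc_m}P_m$ — yields an $m$-independent bound
\begin{equation*}
\dsp\sup_{t\in[0,T]}\|\Pi_{\disc_m}\widehat c_m(t)\|_{L^2(\O)}+\|\nabla_{\disc_m}\widehat c_m\|_{L^2(\O\times(0,T))^d}\leq C,
\end{equation*}
and hence the same for $c_m=\widehat c_m+P_m$. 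Testing with $\varphi=\widehat c_m^{(n+1)}$ and arguing as in Lemma \ref{lemma-est-dual} gives $\int_0^T\|\delta_{\disc_m}\widehat c_m(t)\|_{\star,\disc_m}^2\,\ud t\leq C$. Existence of a discrete solution follows exactly as in Corollary \ref{cor-rdm}: at each time step the reduced scheme is a square nonlinear system whose linearisation (freezing $F(\Pi_\disc\cdot)$) is invertible thanks to the a priori estimate, and Brouwer's fixed point theorem applies.

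With these estimates the four stages of the proof of Theorem \ref{th-mr} transfer almost verbatim. \textbf{(i) Compactness.} By the homogeneous-Dirichlet version of \cite[Lemma 4.8]{30}, up to a subsequence $\Pi_{\disc_m}\widehat c_m\to\widehat c$ weakly in $L^2(0,T;L^2(\O))$ and $\nabla_{\disc_m}\widehat c_m\to\nabla\widehat c$ weakly in $L^2(0,T;L^2(\O)^d)$ with $\widehat c\in L^2(0,T;H_0^1(\O))$; the dual-norm estimate and the compactness of $(\disc_m)_{m\in\NN}$ upgrade this, via \cite[Theorem 4.14]{30}, to strong convergence of $\Pi_{\disc_m}\widehat c_m$ in $L^2(\O\times(0,T))$, hence $\Pi_{\disc_m}c_m\to\bar c$ strongly and $\nabla_{\disc_m}c_m\to\nabla\bar c$ weakly. \textbf{(ii) Convergence of the scheme.} Given $\bar\varphi\in L^2(0,T;H_0^1(\O))$ with $\partial_t\bar\varphi\in L^2(\O\times(0,T))$ and $\bar\varphi(\cdot,T)=0$, interpolate it by $v_m\in X_{\disc_m,0}^{N_m+1}$ as in \cite[Lemma 4.10]{30}, put $\varphi=\delta t_m^{(n+\frac12)}v_m^{(n)}$ into \eqref{fn-disc-pblm}, sum over $n$, use the discrete integration by parts \cite[Eq. (D.15)]{30}, and pass to the limit: $F(\Pi_{\disc_m}c_m)\to F(\bar c)$ in $L^2(\O\times(0,T))$ by dominated convergence and \eqref{assump-rm-3}, and $\Pi_{\disc_m}c_m^{(0)}\to c_{\rm ini}$ by consistency, so the limit $\widehat c=\bar c-w$ satisfies \eqref{fn-weak}; in particular \eqref{fn-weak} possesses a solution. \textbf{(iii) Uniform-in-time convergence.} Reproduce Step 3 of the proof of Theorem \ref{th-mr}: write the discrete energy inequality at a time $s_m\to s$, use weak lower semicontinuity for the gradient term, combine with the energy equality obtained by testing \eqref{fn-weak} with $\widehat c\,\chi_{[0,s]}$, and invoke the weak-in-$L^2$ uniform-in-time convergence of \cite[Theorem 4.19]{30} together with \cite[Lemma C.13]{30}, which gives the first claimed convergence. \textbf{(iv) Strong gradient convergence.} As in Step 4, expand $\|\nabla_{\disc_m}c_m-\nabla\bar c\|_{L^2(\O\times(0,T))^d}^2$, test \eqref{fn-disc-pblm} with $\varphi=\widehat c_m$ and \eqref{fn-weak} with $\bar\varphi=\widehat c$ to identify $\limsup_m\int_0^T\!\!\int_\O{\bf A}\,\nabla_{\disc_m}c_m\cdot\nabla_{\disc_m}c_m$ with $\int_0^T\!\!\int_\O{\bf A}\,\nabla\bar c\cdot\nabla\bar c$ (the time-derivative and reaction terms having already passed to the limit in (ii)--(iii), and $\nabla_{\disc_m}(c_m-\widehat c_m)=\nabla_{\disc_m}P_m\to\nabla w$ strongly), and conclude by the uniform coercivity of ${\bf A}$.

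The step I expect to be the main obstacle is the treatment of the discrete lifting $P_m$, and in particular the term $\int_\O\delta_{\disc_m}P_m\,\Pi_{\disc_m}\varphi\,\ud\x$ that appears both in the energy estimate and in the passage to the limit: since $\partial_t w$ only lies in $L^2(0,T;H^{-1}(\O))$ and not in $L^2(\O\times(0,T))$, this term cannot be handled as an $L^2$ product and must instead be controlled through the duality pairing $\langle\cdot,\cdot\rangle_{H^{-1},H^1}$, which is legitimate precisely because $\|\Pi_{\disc_m}\varphi\|_{L^2(\O)}$ and $\|\nabla_{\disc_m}\varphi\|_{L^2(\O)^d}$ are both dominated by $\|\varphi\|_{\disc_m}=\|\nabla_{\disc_m}\varphi\|_{L^2(\O)^d}$ through the uniform discrete Poincaré inequality valid on $X_{\disc_m,0}$, and which requires the Dirichlet interpolation estimate of \cite{30} for data whose time derivative is only in $H^{-1}$. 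The genuine time-dependence of ${\bf A}(\x,t)$ is only a mild additional nuisance, handled by a density/continuity argument in stage (iv); everything else is a routine transcription of the homogeneous-Neumann proof.
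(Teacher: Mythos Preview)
Your proposal is correct and is actually considerably more detailed than what the paper does. The paper's own ``proof'' consists of a single paragraph stating that Estimates \eqref{est-rm} and \eqref{eq-est-dual} can be established for $c$ solving \eqref{fn-disc-pblm}, that the argument is then identical to Theorem \ref{th-mr}, and that the role of \cite[Lemma 4.8]{30} is played by the lemma stated immediately after the theorem (proved as \cite[Lemma 3.21]{30}), which gives weak convergence directly for a sequence $c_m$ with $c_m-\cI_{\disc_m,\dr}g\in X_{\disc_m,0}^{N_m+1}$ and bounded discrete gradient, together with the correct trace for the limit.

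The genuine methodological difference is this: the paper bypasses any explicit lifting and works with $c_m$ throughout, invoking the ready-made non-homogeneous compactness lemma from \cite{30}; you instead construct a discrete lifting $P_m$, reduce to $\widehat c_m=c_m-P_m\in X_{\disc_m,0}$, and recycle the homogeneous machinery verbatim. Your route is more self-contained and makes transparent exactly where the boundary data enters (the two source terms built from $\delta_{\disc_m}P_m$ and $\nabla_{\disc_m}P_m$), at the cost of having to control those terms --- in particular the one you flag, with $\partial_t w$ only in $L^2(0,T;H^{-1}(\O))$. The paper's route is shorter but leaves unexplained how one tests with $c^{(n+1)}$ in the energy estimate when the admissible test functions lie in $X_{\disc,0}$; implicitly this also requires a lifting or the corresponding machinery from \cite{30}, so your explicitness is a gain, not a detour.

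One small point: as you implicitly noticed, the convergence in the theorem's statement should morally be $\Pi_{\disc_m}c_m\to\bar c$ (with $\widehat c=\bar c-w$ solving \eqref{fn-weak}); your write-up handles this correctly.
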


We can simply establish Estimates \eqref{est-rm} and \eqref{eq-est-dual} with $c$ is a solution to \eqref{fn-disc-pblm} and $\disc$ is a gradient discretisations in the sense of Definition of \ref{fn-def-gd-rm}. Therefore, the proof of the above theorem can exactly be handled as the one of Theorem \ref{th-mr}. Remark that the matches of \cite[Lemma 4.8]{30} is the following results that can be proved as \cite[Lemma 3.21]{30}.

\begin{lemma}
Let $(\disc_m)_{m\in\NN}$ be a sequence of gradient discretisations in the sense of Definition \ref{fn-def-gd-rm} which is consistent and limit-conforming in the sense of Definitions \ref{fn-def:cons-rm} and \ref{fn-def:lconf-rm}. Let  $c_m \in X_{\disc_m}^{N_m+1}$ be such that $c_m-\mathcal I_{\disc_m,\dr}g \in X_{\disc,0}^{N_m+1}$ and $(|| \nabla_{\disc_m}c_m ||_{L^2(0,T;L^2(\O)^d)})_{m\in\NN}$ is bounded. Then, there exists $c \in L^2(0,T;H^1(\O))$ (which is of trace equals $g$ on $\dr \O$), and, up to a subsequence, $\Pi_{\disc_m}c_m \to c$ weakly in $L^2(0,T;L^2(\O))$ and $\nabla_{\disc_m}u_m \to \nabla c$ weakly in $L^2(0,T;L^2(\O))^d$.  
\end{lemma}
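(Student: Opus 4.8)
The plan is to transcribe the steady Dirichlet argument of \cite[Lemma 3.21]{30} with the time variable carried along as an inert parameter, working throughout in the Bochner spaces $L^2(0,T;L^2(\O))$ and $L^2(0,T;L^2(\O)^d)$. First I would fix once and for all a lift $w\in L^2(0,T;H^1(\O))$ of $g$, i.e.\ with $\gamma w(\cdot,t)=g(\cdot,t)$ for a.e.\ $t$ (such a $w$ exists by the standing hypothesis on $g$). Using the GD--consistency of Definition \ref{fn-def:cons-rm} together with a measurable selection in the minimisation defining $S_{\disc_m}$ (the time--dependent interpolation being assembled time level by time level as in \cite[Lemma 4.10]{30}), I would produce $P_{\disc_m}w\in X_{\disc_m}^{N_m+1}$ with $P_{\disc_m}w-\mathcal{I}_{\disc_m,\dr}g\in X_{\disc_m,0}^{N_m+1}$, such that $\Pi_{\disc_m}P_{\disc_m}w\to w$ strongly in $L^2(0,T;L^2(\O))$ and $\nabla_{\disc_m}P_{\disc_m}w\to\nabla w$ strongly in $L^2(0,T;L^2(\O)^d)$. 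Setting $v_m:=c_m-P_{\disc_m}w\in X_{\disc_m,0}^{N_m+1}$, the field $\nabla_{\disc_m}v_m=\nabla_{\disc_m}c_m-\nabla_{\disc_m}P_{\disc_m}w$ is then bounded in $L^2(0,T;L^2(\O)^d)$, and the uniform discrete Poincar\'e inequality on $X_{\disc_m,0}$ available in this framework (coercivity; see \cite[Chapter 2]{30}) bounds $\Pi_{\disc_m}v_m$ in $L^2(0,T;L^2(\O))$, hence also $\Pi_{\disc_m}c_m$ and $\nabla_{\disc_m}c_m$. Passing to a subsequence, $\Pi_{\disc_m}c_m\rightharpoonup c$ in $L^2(0,T;L^2(\O))$ and $\nabla_{\disc_m}c_m\rightharpoonup G$ in $L^2(0,T;L^2(\O)^d)$; equivalently $\Pi_{\disc_m}v_m\rightharpoonup c-w$ and $\nabla_{\disc_m}v_m\rightharpoonup G-\nabla w$.

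Next I would exploit limit--conformity to identify $G$ and the trace of $c$. For $\bpsi\in\overline H_{\rm div}$ and $\theta\in C_c^\infty((0,T))$, applying Definition \ref{fn-def:lconf-rm} to $v_m(\cdot,t)\in X_{\disc_m,0}$ for a.e.\ $t$ and integrating against $\theta$ gives
\[
\Big|\int_0^T\int_\O\theta(t)\big(\nabla_{\disc_m}v_m\cdot\bpsi+\Pi_{\disc_m}v_m\,\div\bpsi\big)\ud\x\ud t\Big|\le\|\theta\|_{L^\infty}\sqrt{T}\;W_{\disc_m}(\bpsi)\,\|\nabla_{\disc_m}v_m\|_{L^2(0,T;L^2(\O)^d)},
\]
which tends to $0$ because $W_{\disc_m}(\bpsi)\to0$ and the last factor is bounded. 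On the other hand $\theta\bpsi\in L^2((0,T)\times\O)^d$ and $\theta\,\div\bpsi\in L^2((0,T)\times\O)$, so the left--hand side converges, by the weak convergences just obtained, to $\int_0^T\int_\O\theta\big((G-\nabla w)\cdot\bpsi+(c-w)\div\bpsi\big)\ud\x\ud t$. Since $\theta$ is arbitrary, for a.e.\ $t$ and every $\bpsi\in\overline H_{\rm div}$ (the exceptional null set being made $\bpsi$--independent by separability of $\overline H_{\rm div}$ and continuity of the bracket in $\bpsi$),
\[
\int_\O\big((G(\cdot,t)-\nabla w(\cdot,t))\cdot\bpsi+(c(\cdot,t)-w(\cdot,t))\,\div\bpsi\big)\ud\x=0.
\]

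From this identity I would conclude as follows. Testing with $\bpsi\in C_c^\infty(\O)^d\subset\overline H_{\rm div}$ shows that, for a.e.\ $t$, the distributional gradient of $c(\cdot,t)-w(\cdot,t)$ equals $G(\cdot,t)-\nabla w(\cdot,t)\in L^2(\O)^d$, so $c(\cdot,t)\in H^1(\O)$ with $\nabla c(\cdot,t)=G(\cdot,t)$; squaring and integrating in $t$ gives $c\in L^2(0,T;H^1(\O))$ and $G=\nabla c$, whence $\nabla_{\disc_m}c_m\rightharpoonup\nabla c$. Testing now with a general $\bpsi\in\overline H_{\rm div}$ and invoking Green's formula $\int_\O(\nabla\phi\cdot\bpsi+\phi\,\div\bpsi)\ud\x=\langle\gamma\phi,\bpsi\cdot\bfn\rangle_{H^{1/2}(\dr\O),H^{-1/2}(\dr\O)}$ with $\phi=(c-w)(\cdot,t)$ yields $\langle\gamma(c-w)(\cdot,t),\bpsi\cdot\bfn\rangle=0$ for all such $\bpsi$; since the normal trace $\bpsi\mapsto\bpsi\cdot\bfn$ maps $\overline H_{\rm div}$ onto $H^{-1/2}(\dr\O)$, this forces $\gamma c(\cdot,t)=\gamma w(\cdot,t)=g(\cdot,t)$ for a.e.\ $t$, i.e.\ $c$ has trace $g$ on $\dr\O$. (The evident typo $u_m$ in the statement is read as $c_m$.)

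The hard part is not the algebra but the two preparatory facts just used: (i) upgrading the pointwise--in--time consistency $S_{\disc_m}(w(\cdot,t))\to0$ to the strong $L^2(0,T;\cdot)$ convergences of $\Pi_{\disc_m}P_{\disc_m}w$ and $\nabla_{\disc_m}P_{\disc_m}w$, which requires a measurable selection and a dominated--convergence argument with an integrable majorant for $t\mapsto S_{\disc_m}(w(\cdot,t))$; and (ii) the uniform discrete Poincar\'e inequality on $X_{\disc_m,0}$ needed to turn the bound on $\nabla_{\disc_m}v_m$ into a bound on $\Pi_{\disc_m}v_m$, and hence into the claimed weak convergence of $\Pi_{\disc_m}c_m$. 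Both are standard ingredients of the gradient discretisation machinery in \cite{30}, and once they are in place the proof is a line--by--line copy of the steady Dirichlet case \cite[Lemma 3.21]{30}.
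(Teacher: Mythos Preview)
Your proposal is correct and matches the paper's approach exactly: the paper does not spell out a proof of this lemma at all, but simply states that it ``can be proved as \cite[Lemma 3.21]{30}'', which is precisely the steady non-homogeneous Dirichlet result you transcribe with time carried as an inert parameter. Your sketch fills in the details the paper omits---the lift $w$, the time-dependent interpolant $P_{\disc_m}w$, the reduction to $v_m\in X_{\disc_m,0}^{N_m+1}$, the coercivity/Poincar\'e bound, and the limit-conformity identification of $G=\nabla c$ and $\gamma c=g$---and correctly flags the two places (measurable selection for the interpolant, uniform Poincar\'e) where one must invoke standard GDM machinery from \cite{30}.
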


The HMM scheme for \eqref{fn-weak} is the gradient scheme \eqref{fn-disc-pblm}
coming from the following constructed gradient discretisations.
\begin{equation*}
\begin{aligned}
X_{\disc,0}=\{ \varphi=((\varphi_{K})_{K\in \mathcal{M}}, (\varphi_{\sigma})_{\sigma \in \mathcal{E}})\;:\;{}& \varphi_{K} \in \RR,\;  \varphi_{\sigma} \in \RR,\; \varphi_{\sigma}=0 \; \mbox{for all}\; \sigma \in \edgesext\},\\
X_{\disc,\dr\O}=\{ \varphi=((\varphi_{K})_{K\in \mathcal{M}}, (\varphi_{\sigma})_{\sigma \in \mathcal{E}})\;:\;{}& \varphi_{\sigma} \in \RR,\;  \varphi_K=0 \mbox{ for all } K\in \mesh,\\
& \varphi_\sigma=0 \mbox{ for all } \sigma \in \edgesint\}.
\end{aligned}
\end{equation*}
The interpolant $\cI_{\disc,\dr}:H^{\frac{1}{2}}(\dr\O)\to X_{\disc,\dr\O}$ is defined by
\begin{equation}\label{fn-def:ID}
\begin{aligned}
\forall g\in H^{\frac{1}{2}}(\dr\O)\,:\,{}&(\cI_{\disc,\dr}g)_{\sigma}=\frac{1}{|\sigma|}\int_\sigma g(x)\ud s(x),
\mbox{ for all } \sigma \in\edgesext.
\end{aligned}
\end{equation}
The remaining discrete mappings $\Pi_\disc$, $\nabla_\disc$ and $J_{\disc}$ are as in Section \ref{sec:HMM}.

The Crouzeix--Raviart scheme for \eqref{fn-weak} is the gradient scheme \eqref{fn-disc-pblm}
coming from the following constructed gradient discretisations.
\begin{equation*}
\begin{aligned}
X_{\disc,0}=\{ \varphi=(v_{\sigma})_{\sigma \in \mathcal{E}}\;:\;{}& \varphi_{K} \in \RR,\;   \varphi_{\sigma}=0 \; \mbox{for all}\; \sigma \in \edgesext\},\\
X_{\disc,\dr\O}=\{ \varphi=(\varphi_{\sigma})_{\sigma \in \mathcal{E}}\;:\;{}& \varphi_{\sigma} \in \RR,\;  \varphi_\sigma=0 \mbox{ for all } \sigma \in \edgesint\}.
\end{aligned}
\end{equation*}
The interpolant $\cI_{\disc,\dr}$ is defined as in \eqref{fn-def:ID}. The remaining discrete mappings $\Pi_\disc$, $\nabla_\disc$ and $J_{\disc}$ are defined as in Section \ref{sr-scheme}.
It is proved in \cite[Chapter 9]{30} that the gradient discretisations of the case of non homogeneous Dirichlet boundary conditions satisfy the three properties; the consistency, the limit conformity and the compactness. Therefore, Theorem \ref{fn-th-mr} provides the convergence of the HMM and Crouzeix--Raviart schemes for the problem \eqref{fn-weak}.

\section{Numerical results}\label{sec-nr}
\par We employ the HMM method presented previously to model how a brain tumor invades an anisotropic environment, as simulated by the macroscopic reaction diffusion model \eqref{problem-rm}, where $\bar c$ represents the density of cancerous cells at a given time, $t$, and brain location, $\x=(x,y)$, which belongs to the square domain $\Omega=[-L,L]^2$, with non-flux boundary conditions. The forward Euler method is utilized for the time-discretization, incorporating the use of time step $\delta t$. In accordance with \cite{21}, we utilize the initial data, incorporating several distorted Gaussian functions that are focused on a range of points chosen as
\begin{equation}
\begin{aligned}
\bar{c}(\x,0)=0.8\exp(-0.1(x&-1)^2-0.3(y-3)^2)\\&+0.75\exp(-0.25(x-10)^2-0.15(y+9)^2)\\
&+0.6\exp(-0.2(x+3)^2-0.5(y+4)^2)\\
&+0.5\exp(-0.25(x+5)^2-0.3(y-1)^2).
\end{aligned}
\label{eq6}
\end{equation}

\subsection{Description of the underlying model}
\subsubsection{The diffusion term}
\par{}
Returning to the reaction model \eqref{problem-rm}, anisotropic diffusion ${\bf A}$ has a specific symmetric and positive definite matrix and is regarded as being in direct proportion to the measured water diffusion tensor ($DT$) harvested by DTI data \cite{19,20}. $DT$ details water molecule diffusion employing a Gaussian model. For a.e. $\x\in\O\subset\RR^2$, $DT$ can be represented by \cite{19}
\begin{equation}\label{eq2}
DT(\x)=\lambda_1\phi_1\phi_1^T+ \lambda_2\phi_2\phi_2^T,
\end{equation}
the non negative $\lambda_1$ and $\lambda_2$ are the eigenvalues of ${\bf A(\x)}$, and $\phi_1$ and $\phi_2$ are (orthogonal and normalised) eigenvectors corresponding to $\lambda_1$ and $\lambda_2$ respectively. The axis of dominating anisotropy is indicated by the eigenvector $\phi_{1}$ and the degree of anisotropy is determined by the eigenvalue size \cite{19,20}. For a.e. $\x\in\O\subset \RR^2$, computing the diffusion tensor ${\bf A(\x)}$ from \eqref{eq2} is then given as \cite{19}
\begin{equation}\label{eq3}
{\bf A}(\x)=\frac{r^2}{\mu}((\delta+(1-\delta)(1-\frac{I_2(M(\x))}{I_0(M(\x))}))I+2(1-\delta)\frac{I_2(M(\x))}{I_0(M(\x))}\phi_1\phi_1^T).
\end{equation}
For the details of computations, we refer the reader to \cite{20}. 
${\bf A}$ is divided into isotropic (the I term) and anisotropic components; the anisotropy points in the direction $\pm\phi_1$. These terms' relative sizes are dictated by $\delta$ and the function $M$ \cite{19}. The function $M$ details the concentration levels surrounding the dominant direction, for a.e. $\x\in\O\subset\RR^2$, it is represented by \cite{19}
\begin{equation}
M(\x)=\kappa F_{anis}(DT(\x)),\label{eq4}
\end{equation}
where $\kappa$ being a proportionality constant denoting how sensitive the cells are to directional data within the environment; $F_{anis}$ represents fractional anisotropy, the most commonly employed measure for anisotropy. For a.e. $\x\in\O\subset\RR^2$, the fractional anisotropy defined as \cite{19}:
\begin{equation}
F_{anis}(DT(\x))=\frac{|\lambda_{1}-\lambda_{2}
|}{\sqrt{\lambda_{1}^{2}+\lambda_{2}^{2}}}.\label{eq5}
\end{equation}

We can think of this as representing the difference between a perfect sphere and the ellipsoid form of the tensor \cite{19}. 
The function $I_j$ denotes the modified Bessel function of first kind of order $j$. The constants $\mu$ and $r$ respectively represent a tumor cell's turning rate and constant average speed \cite{19,20}.
The diffusion tensor ${\bf A}$ in our model is thus taken here to be an anisotropic informed by DTI diffusion tensor ($DT$) given by \eqref{eq3} and $DT$ is chosen as \cite{19} 
\begin{equation}
DT(\x)=\begin{pmatrix}
0.5-d(x,y)&0\\
0&0.5+d(x,y)
\end{pmatrix},
\label{eq7}
\end{equation}
where $d(x,y)=0.25\exp(-0.05x^2)-0.25\exp(-0.5y^2)$ and with parameters are chosen as $\delta=0.05$, $r=1$ and $\mu=0$.

\subsubsection{The reaction term}
With the proliferation term $F(\bar c)$ that details a population of tumor cells' growth, the majority of researchers have regarded it as a quadratic function that may be exponential: $F(\bar c)=\rho \bar{c}$ with constant proliferation rate $\rho$, illustrating that cellular division is obedient to a cycle, or logistic: $F(\bar c)=\rho \bar{c}(1-\bar{c})$ with a decrease in $\rho$, the proliferation parameter, for zones of high cellular density. Nevertheless, in this research, following \cite{21}, cell growth is modeled with a cubic behavior so that the influence of the threshold of cancer cell density can be captured, something that is ignored when a quadratic function is employed. The threshold is a local driver of normal tissue for cancer cell regimes or will not permit cancer cells to colonize tissues, keeping it free of tumors, and this is an important factor to consider. Thus we consider a reaction term of \cite{21,22}:
\[
F(\bar c)=\rho \bar{c}(1-\bar{c})(\bar{c}-\alpha),
\]
where $0<\alpha<1$ represents the cancer generation threshold. In our work, this threshold was fixed as $\alpha=0.1$ with a rate of proliferation of $\rho=1$.
This is the standard measurement used in bistable equation models, as highlighted in \cite{21}.

\subsection{Tumour growth evolution}
We subsequently use simulation of \eqref{problem-rm} to assess the impact that the anisotropic environment has on the spatio-temporal evolution of the glioma tumor at various measurements of $\kappa$. 
 The study is performed in the context of a 2D squared domain $\Omega=[-20,20]^2$ that incorporates a uniform mesh that consists of $3584$ triangular elements, that are taken at different times, denoted by  $T=0.2$, $T=5$, $T=15$ and $T=20$. The results of simulations are shown in Fig. \ref{fig-t1}, Fig. \ref{fig-t2}, Fig. \ref{fig-t3} and Fig. \ref{fig-t4} for $\kappa=0$,  $\kappa=5$, $\kappa=10$ and $\kappa=30$, respectively. In the case of the isotropic case ($\kappa=0$), the outcomes of the HMM approach are aligned with the findings outlined in \cite{21}. We expanded the research to include the time considerations for the invasion rate in an anisotropic environment, as depicted in Fig. \ref{fig-t2}, Fig. \ref{fig-t3} and Fig. \ref{fig-t4}. 
  The figures show the density of cancerous cells as a function of time indicating by color-map that is displayed on the figures, which spans from the lowest density, blue ($\bar{c}=0$), to the highest density, red ($\bar{c}=1$). 
The simulation commences from a relatively irregular inhomogeneous initial data \eqref{eq6}. As time passes, there is a tendency for some of the regions of cancer to reduce in density. However, by $T=20$, the colonization of one of the peaked populations is observed. The way in which these cancer cells are diffused in a heterogeneous environment within the brain is also taken into consideration. The isotropic case, $\kappa=0$, can be observed in  
Fig\ref{fig-t1}. In this case, the tumour cells are equally diffused in a myriad of directions and exhibit spherical symmetry. When $\kappa$ is enhanced from zero the effect that the environmental anisotropy has on cell turning increases and an increasingly non-uniform invasion is observed, as can be seen in Fig. \ref{fig-t2} for $\kappa=5$ , Fig. \ref{fig-t3} for $\kappa=10$ and Fig. \ref{fig-t4} for $\kappa=30$. For instance, at $T=20$, with $\kappa=0$ a circular invasion can be observed. This becomes increasingly irregular in shape when $\kappa=30$. The outcome of this is a distinct disparity between the anisotropic and isotropic cases. As can be clearly seen, the HMM simulations results, in terms of the expansion of the brain tumours, are consistent with the results in \cite{19}.

\subsection{Measure of diffusion anisotropy}
Researchers have recommended a range of eigenvalue-based formulae for the purposes of measuring the anisotropy in the diffusion tensor; for example, the fractional anisotropy ($F_{anis}$) measure outlined in \eqref{eq5}. $F_{anis}$ was named such due to the fact that it measures the anisotropic fraction of the diffusion. This measure can be perceived as representative of the variation between a perfect sphere and the ellipsoid form of the tensor.  
  To investigate this point in more depth, in Fig. \ref{fig-t5} we plot the histogram of $F_{anis}$ for ${\bf A}$ under (a) $\kappa=0$,  (b) $\kappa=5$, (c) $\kappa=10$ and (d) $\kappa=30$ at the time $T=10$. Other parameters are set as in Fig. \ref{fig-t1}. $F_{anis}$ is normalized to attain values in $[0,1]$ with $F_{anis}=0$ corresponding to the isotropic case as in (a) for $\kappa=0$ and $F_{anis}=1$ denoting a completely anisotropic case as clearly observed in (d) for $\kappa=30$. Moreover, the maximum value of the $F_{anis}$ increases from around of $0.6$ to approximately $0.8$ as the parameter $\kappa$ is increased from $\kappa=5$ to $\kappa=10$. This demonstrates that there is a strong dependency on the $\kappa$ anisotropy enhancement parameter.

\subsection{Anisotropy under different Meshes}
The most robust feature of the HMM approach is that it can be employed on multiple forms of meshes in a variety of space dimensions, with relatively limited limitations on the control volumes, see the numerical tests described by \cite{47}.
We applied the HMM approach to our anisotropic reaction diffusion model for four different types of meshes, presented in Fig. \ref{fig-t7} for $\kappa=0$ and in Fig. \ref{fig-t8} for $\kappa=100$ at time $T=2$. Other parameters are chosen as in Fig. \ref{fig-t1}. 
The first two types (a and b) were constructed on a triangular mesh that included $3584$ cells, and on a rectangular mesh that included 1024 control volumes. The third type was constructed on hexagonal cells, with the number of cells equal to $6561$. The fourth type of mesh was inspired by \textit{Kershowa} \cite{47} and it includes $4624$ control volumes. While this form of mesh is associated with intense distortions, the dynamics of the brain tumor is still captured by the HMM scheme, even in the event of a highly heterogeneous anisotropic case; for example, that presented in Fig. \ref{fig-t8} for $\kappa=100$.

\begin{figure}[ht]
	\begin{center}
	\includegraphics[scale=0.70]{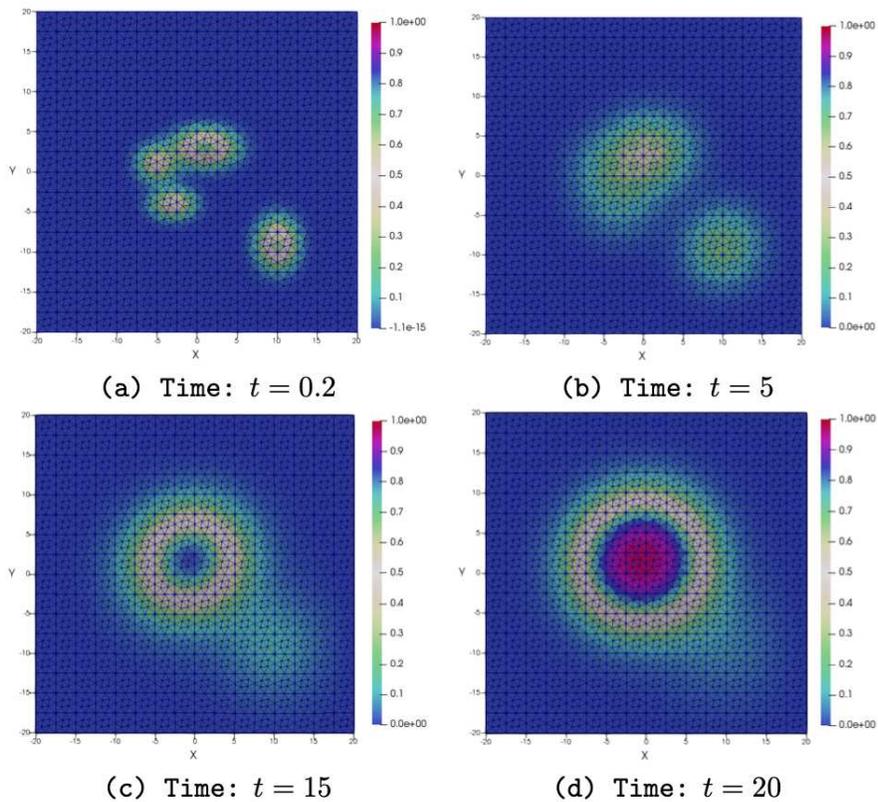}
	\end{center}
	\caption{Isotropic case with $\kappa=0$ , propagation of brain tumour at different time levels: $T=0.2$, $T=5$, $T=15$ and $T=20$.}
	\label{fig-t1}
\end{figure}

\begin{figure}[ht]
	\begin{center}
	\includegraphics[scale=0.70]{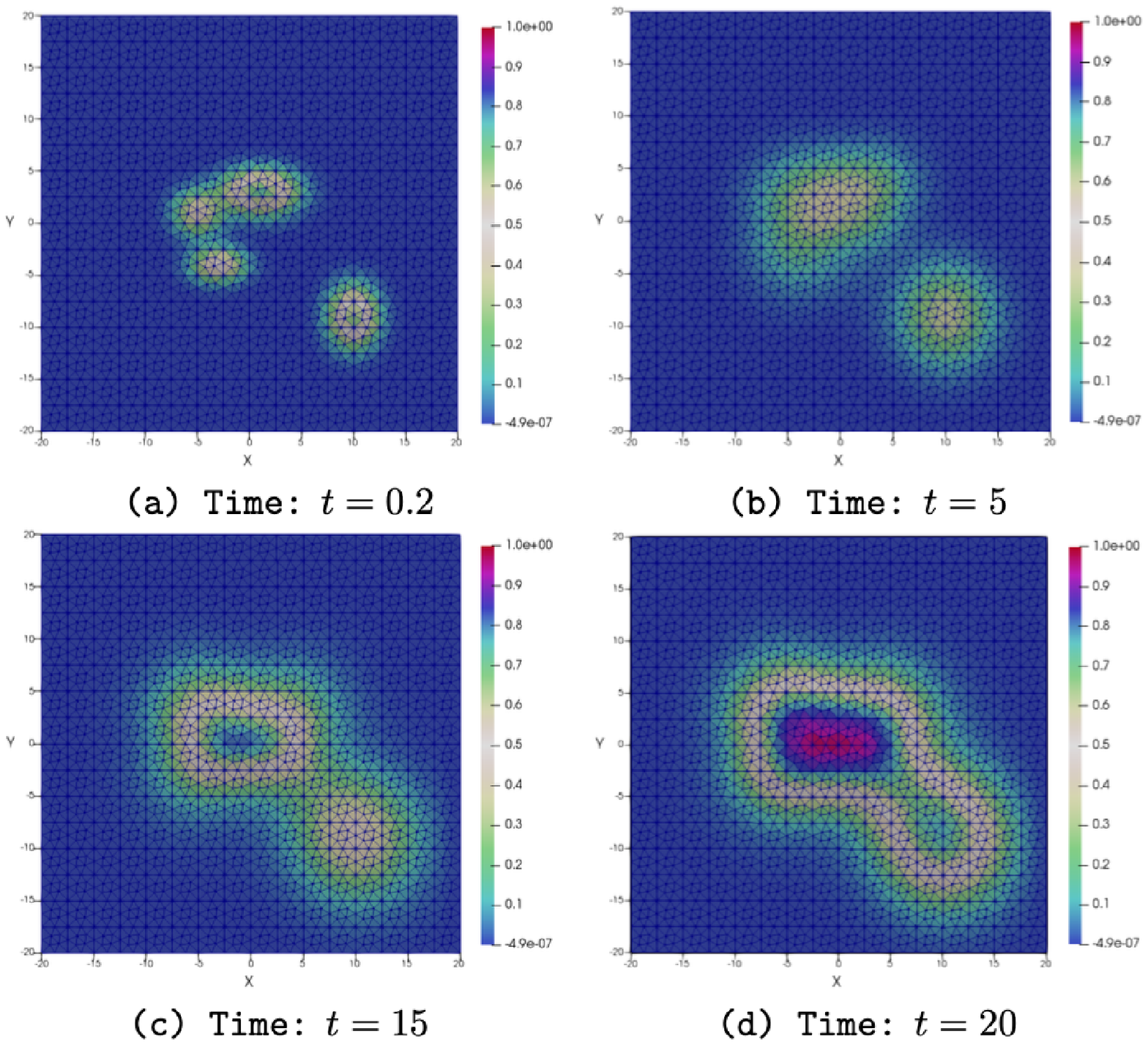}
	\end{center}
	\caption{Anisotropic case with $\kappa=5$ , propagation of brain tumour at different time levels: $T=0.2$, $T=5$, $T=15$ and $T=20$.}
	\label{fig-t2}
\end{figure}

\begin{figure}[ht]
	\begin{center}
	\includegraphics[scale=0.70]{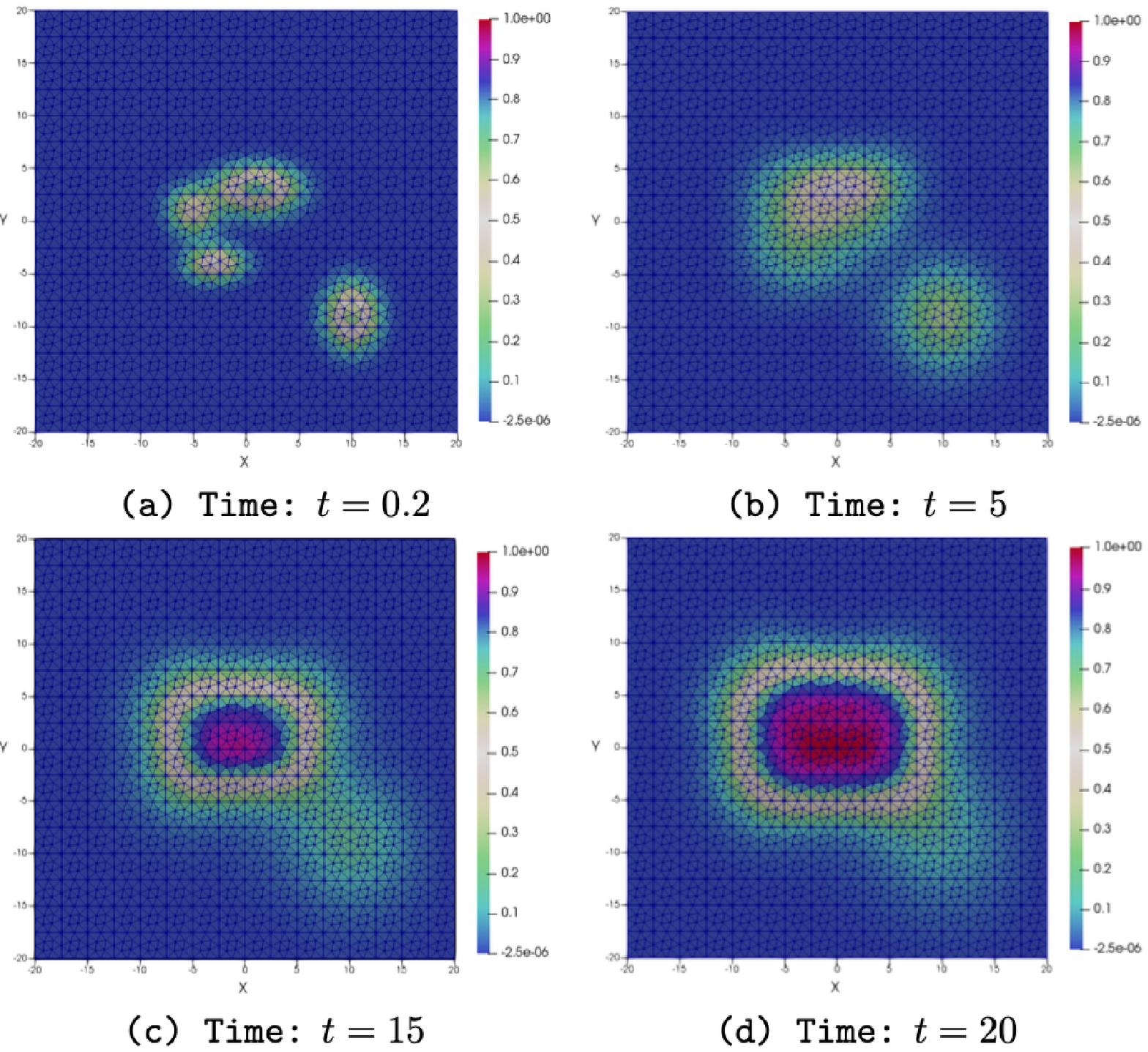}
	\end{center}
	\caption{Anisotropic case with $\kappa=10$ , propagation of brain tumour at different time levels: $T=0.2$, $T=5$, $T=15$ and $T=20$.}
	\label{fig-t3}
\end{figure}

\begin{figure}[ht]
	\begin{center}
	\includegraphics[scale=0.70]{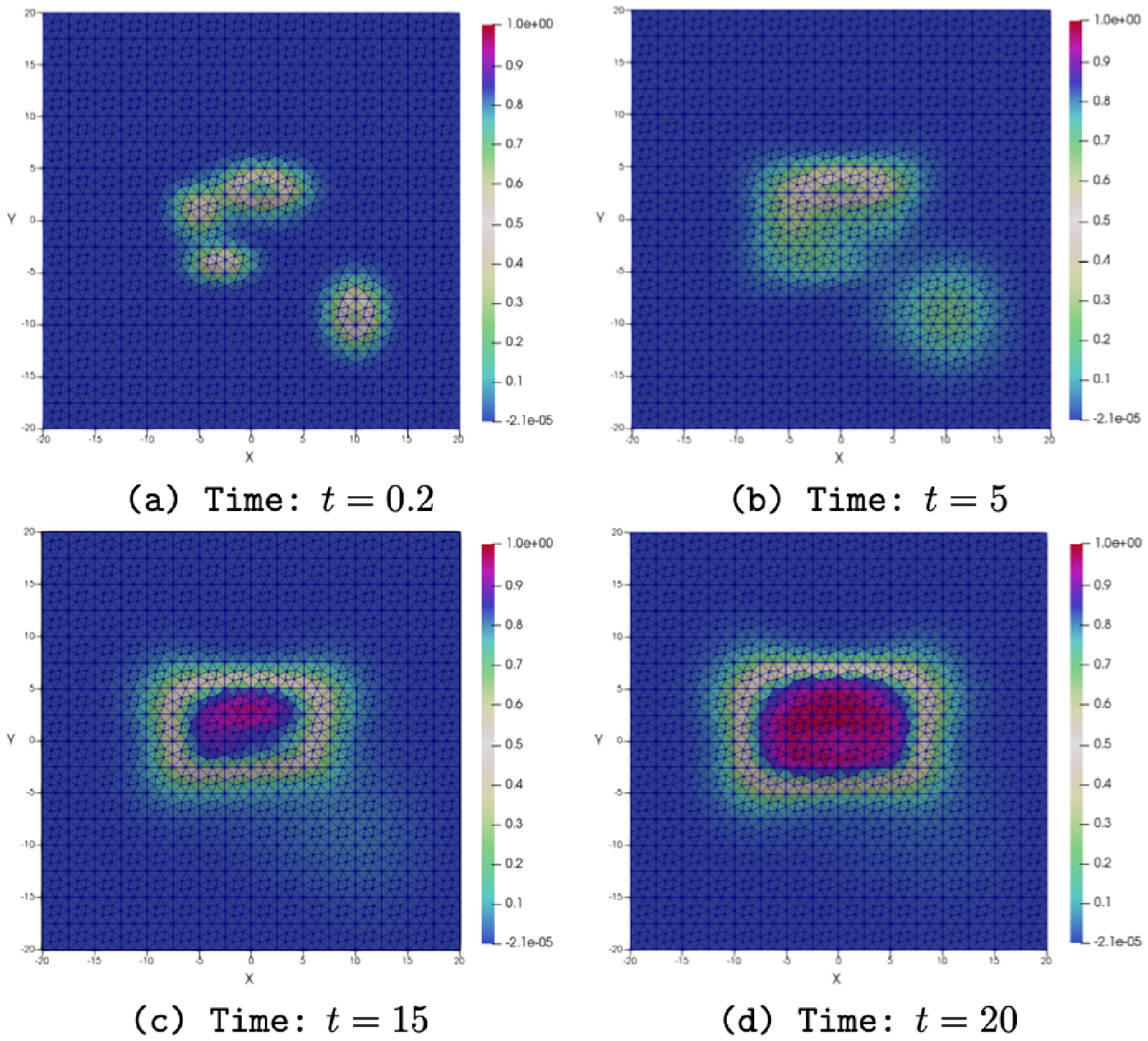}
	\end{center}
	\caption{Anisotropic case with $\kappa=30$ , propagation of brain tumour at different time levels: $T=0.2$, $T=5$, $T=15$ and $T=20$.}
	\label{fig-t4}
\end{figure}

\begin{figure}[ht]
	\begin{center}
	\includegraphics[scale=0.60]{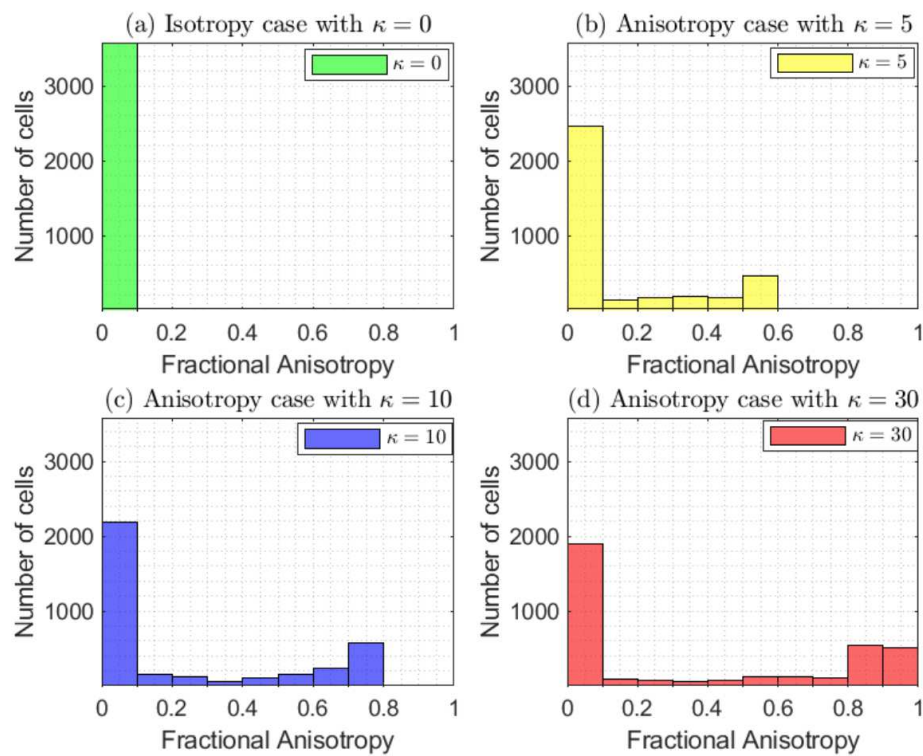}
	\end{center}
\caption{Histogram of fractional anisotropy of the domain cells under different values of $\kappa$ at time $t=10$.}
\label{fig-t5}
\end{figure}
\begin{figure}[ht]
	\begin{center}
	\includegraphics[scale=0.70]{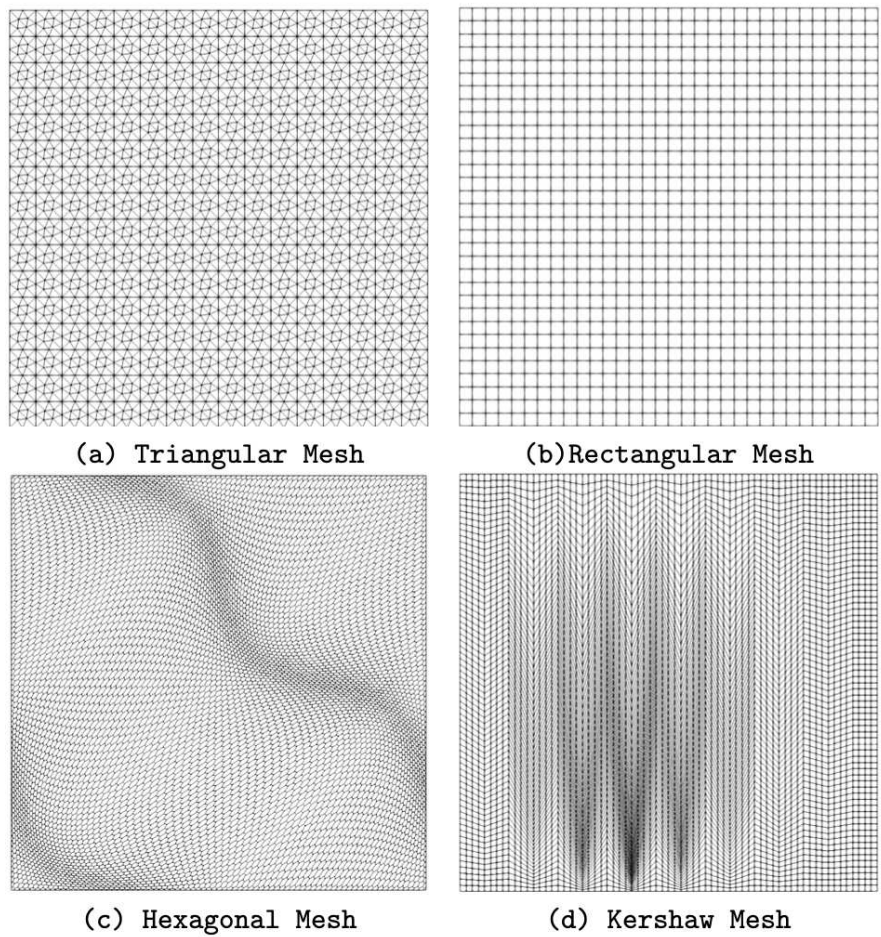}
	\end{center}
	\caption{Samples of the various 2D meshes}
	\label{fig-t6}
\end{figure}
\begin{figure}[ht]
	\begin{center}
	\includegraphics[scale=0.70]{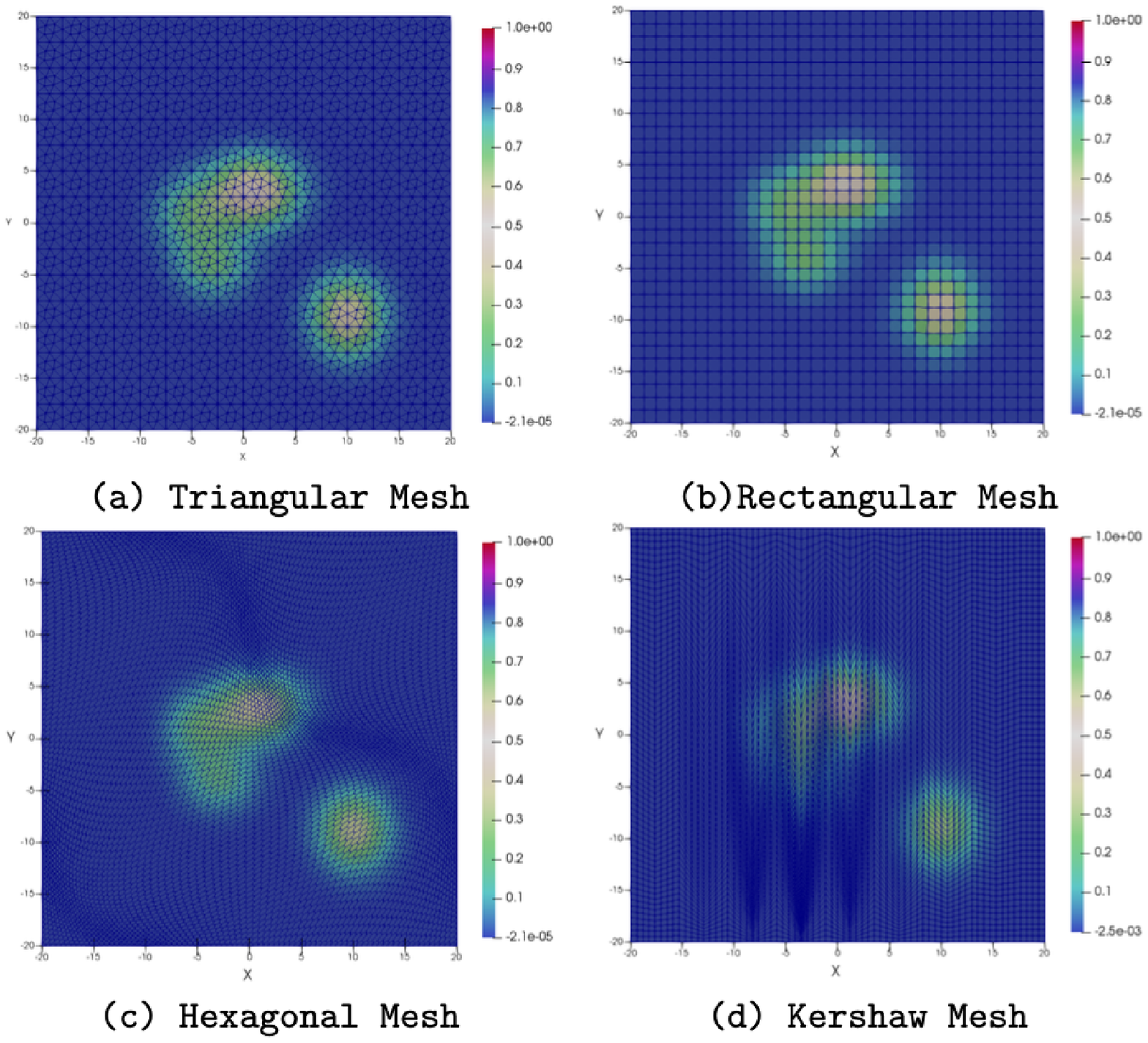}
	\end{center}
	\caption{Isotropic case with $\kappa=0$ ,  simulation of brain tumour at $t=2$ and on different types of meshes.}
	\label{fig-t7}
\end{figure}

\begin{figure}[ht]
	\begin{center}
	\includegraphics[scale=0.70]{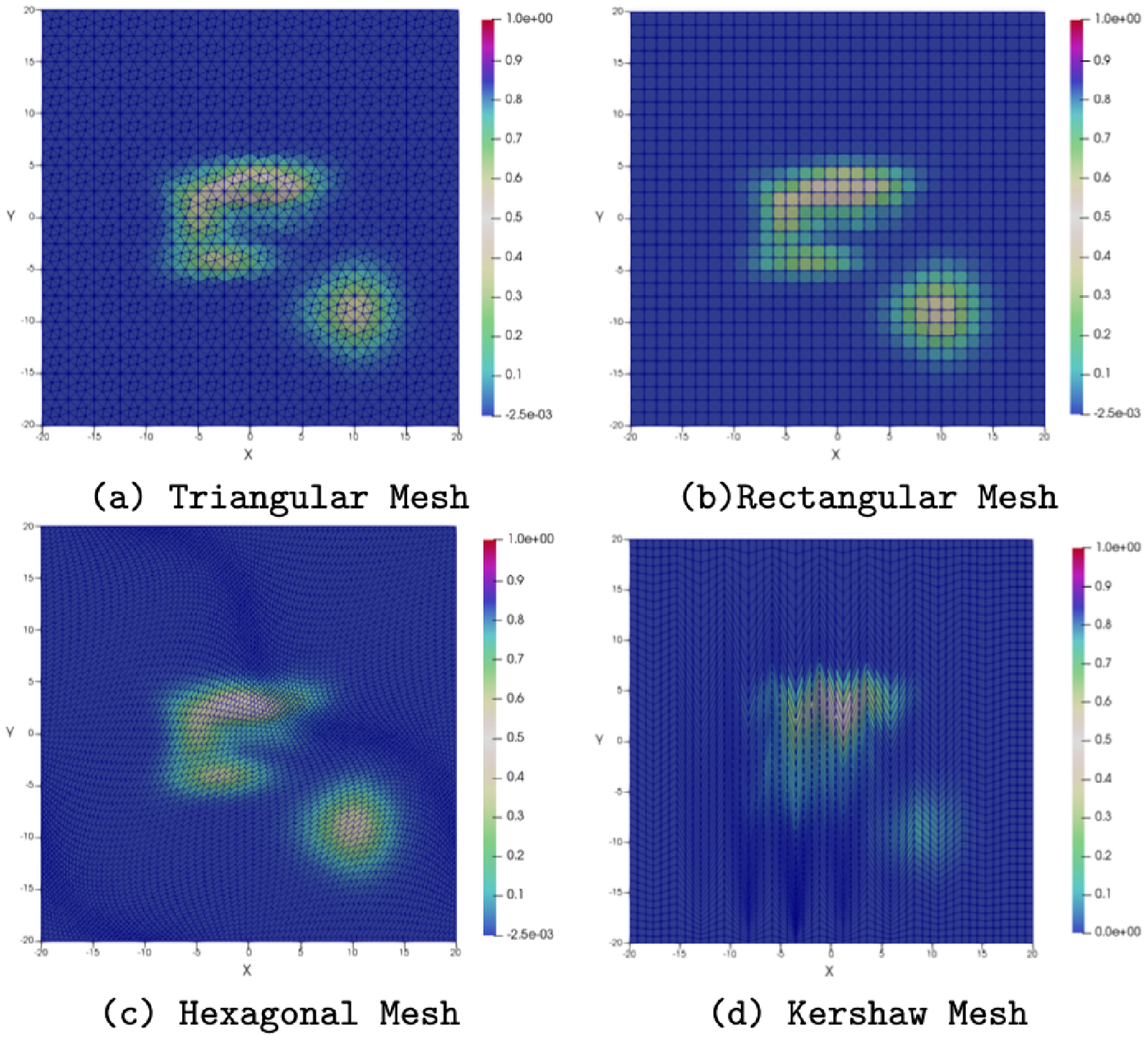}
	\end{center}
	\caption{Anisotropic case with $\kappa=100$ , simulation of brain tumour at $t=2$ and on different types of meshes.}
	\label{fig-t8}
\end{figure}         


\bibliographystyle{siam}
\bibliography{rm-ref}

\begin{thebibliography}{10}

\bibitem{YJ2014}
{\sc Y.~Alnashri and J.~Droniou}, {\em Gradient schemes for an obstacle
  problem}, in Finite Volumes for Complex Applications VII-Methods and
  Theoretical Aspects, J.~Fuhrmann, M.~Ohlberger, and C.~Rohde, eds., vol.~77,
  Springer International Publishing, 2014, pp.~67--75.

\bibitem{8}
{\sc R.~Bammer, A.~Burak, and M.~Moseley}, {\em In vivo {MR} tractography using
  diffusion imaging}, European Journal of Radiology, 45 (2002), pp.~223--234.

\bibitem{12}
{\sc P.~J. Basser, J.~Mattiello, and D.~LeBihan}, {\em {MR} diffusion tensor
  spectroscopy and imaging}, Biophysical Journal, 66 (1994), pp.~259--267.

\bibitem{15}
{\sc P.~Burgess, P.~Kulesa, J.~Murray, and J.~E. Alvord}, {\em The interaction
  of growth rates and diffusion coefficients in a three-dimensional
  mathematical model of gliomas}, Journal of Neuropathy and Experimental
  Neurology, 56 (1997), pp.~704--713.

\bibitem{21}
{\sc C.~Cherubini, A.~Gizzi1, M.~Bertolaso, V.~Tambone, and S.~Filippi}, {\em A
  bistable field model of cancer dynamics}, Communications in Computational
  Physics, 11 (2012), pp.~1--18.

\bibitem{26}
{\sc O.~Clatz, P.-Y. Bondiau, H.~Delingette, M.~Sermesant, S.~K. Warfield,
  G.~Malandain, and N.~Ayacher}, {\em Brain tumor growth simulation}, tech.
  rep., {INRIA}, 2004.

\bibitem{Crouzeix}
{\sc {Crouzeix, M.} and {Raviart, P.-A.}}, {\em Conforming and nonconforming
  finite element methods for solving the stationary stokes equations i},
  R.A.I.R.O., 7 (1973), pp.~33--75.

\bibitem{14}
{\sc G.~Cruywagen, D.~Woodward, P.~Tracqui, G.~Bartoo, J.~Murray, and J.~E.
  Alvord}, {\em The modeling of diffusive tumours}, Journal of Biological
  Systems, 3 (1995), pp.~937--945.

\bibitem{29}
{\sc K.~Das, R.~Singh, and S.~C. Mishra}, {\em Numerical analysis for
  determination of the presence of a tumor and estimation of its size and
  location in a tissue}, Journal of Thermal Biology, 38 (2013), pp.~32--40.

\bibitem{36}
{\sc J.~Droniou and R.~Eymard}, {\em Uniform-in-time convergence of numerical
  methods for non-linear degenerate parabolic equations}, Numer. Math., 132
  (2016), pp.~721--766.

\bibitem{new-52}
{\sc J.~Droniou, R.~Eymard, and P.~Feron}, {\em {Gradient Schemes for Stokes
  problem}}, IMA Journal of Numerical Analysis, 36 (2015), pp.~1636--1669.

\bibitem{30}
{\sc J.~Droniou, R.~Eymard, T.~Gallou\"et, C.~Guichard, and R.~Herbin}, {\em
  The gradient discretisation method}, Mathematics \& Applications, Springer,
  Heidelberg, 2018.

\bibitem{42}
{\sc J.~Droniou, R.~Eymard, T.~Gallou\"et, and R.~Herbin}, {\em A unified
  approach to mimetic finite difference, hybrid finite volume and mixed finite
  volume methodss}, Mathematical Models and Methods in Applied Sciences, 20
  (2010), pp.~265--295.

\bibitem{31}
\leavevmode\vrule height 2pt depth -1.6pt width 23pt, {\em Gradient schemes: a
  generic framework for the discretisation of linear, nonlinear and nonlocal
  elliptic and parabolic problems}, Mathematical Models and Methods in Applied
  Sciences, 23 (2013), pp.~2395--2432.

\bibitem{33}
{\sc J.~Droniou, R.~Eymard, and R.~Herbin}, {\em Gradient schemes: generic
  tools for the numerical analysis of diffusion equations}, {M2AN} Math. Model.
  Numer. Anal., 50 (2016), pp.~749--781.

\bibitem{35}
{\sc R.~Eymard, P.~Feron, T.~Gallou\"et, R.~Herbin, and C.~Guichard}, {\em
  Gradient schemes for the {S}tefan problem}, International Journal On Finite
  Volumes, 10s (2013).

\bibitem{32}
{\sc R.~Eymard, C.~Guichard, and R.~Herbin}, {\em Small-stencil {3D} schemes
  for diffusive flows in porous media}, ESAIM. Mathematical Modelling and
  Numerical Analysis, 46 (2012), pp.~265--290.

\bibitem{34}
{\sc R.~Eymard, C.~Guichard, R.~Herbin, and R.~Masson}, {\em Gradient schemes
  for two-phase flow in heterogeneous porous media and {R}ichards equation},
  ZAMM Z. Angew. Math. Mech., 94 (2014), pp.~560--585.

\bibitem{6}
{\sc A.~Giese, R.~Bjerkvig, M.~Berens, and M.~Westphal}, {\em Cost of
  migration: invasion of malignant gliomas and implications for treatment},
  Journal of Clinical Oncology, 21 (2003), pp.~1624--1636.

\bibitem{7}
{\sc A.~Giese, L.~Kluwe, B.~Laube, H.~Meissner, M.~E. Berens, and M.~Westphal},
  {\em Migration of human glioma cells on myelin}, Neurosurgery, 38 (1996),
  pp.~755--764.

\bibitem{47}
{\sc R.~Herbin and F.~Hubert}, {\em Benchmark on discretization schemes for
  anisotropic diffusion problems on general grids}, in Finite volumes for
  complex applications {V}, ISTE, London, 2008, pp.~659--692.

\bibitem{27}
{\sc T.~Hines}, {\em Mathematically modeling the mass-eff€ect of invasive brain
  tumor}, SIAM Undergraduate Research Online, 74 (2010), pp.~684--700.

\bibitem{new-49}
{\sc M.~Ibrahim and M.~Saad}, {\em On the efficacy of a control volume finite
  element method for the capture of patterns for a volume-filling chemotaxis
  model}, Computers and Mathematics with Applications, 68 (2014), pp.~1032 --
  1051.
\newblock BIOMATH 2013.

\bibitem{24}
{\sc R.~Jaroudi, G.~Baravdish, B.~T. Johansson, and F.~Åström}, {\em
  Numerical reconstruction of brain tumours}, Inverse Proplems in Science and
  Engineering, 27 (2018), pp.~278--298.

\bibitem{25}
{\sc R.~Jaroudi, F.~Åström, B.~Johansson, and G.~Baravdish}, {\em Numerical
  simulations in 3-dimensions of reaction diffusion models for brain tumour
  growth}, International Journal of Computer Mathematics, DOI:
  10.1080/00207160.2019.1613526 (2019), pp.~1--19.

\bibitem{9}
{\sc A.~Jbabdi, E.~Mandonnet, H.~Duffau, L.~Capelle, K.~Swanson, M.~P. Issac,
  R.~Guillevin, and H.~Benali}, {\em Simulation of anisotropic growth of
  low-grade gliomas using diffusion tensor imaging}, Magnetic Resonance in
  Medicine, 54 (2005), pp.~616--624.

\bibitem{new-51}
{\sc X.~Li and W.~Huang}, {\em A study on non negativity preservation in finite
  element approximation of nagumo-type nonlinear differential equations},
  Applied Mathematics and Computation, 309 (2017), pp.~49 -- 67.

\bibitem{11}
{\sc S.~Mori}, {\em Introduction to diffusion tensor imaging}, Elsevier, 2007.

\bibitem{10}
{\sc P.~Mosayebi, D.~Cobzas, A.~Murtha, and M.~Jagersand}, {\em Tumor invasion
  margin on the riemannian space of brain fibers}, Medical Image Analysis, 16
  (2011), pp.~361--373.

\bibitem{5}
{\sc J.~D. Murray}, {\em Mathematical Biology II: Spatial Models and Biomedical
  Application}, Springer, 2004.

\bibitem{3}
{\sc A.~Norden and P.~Wen}, {\em Glioma therapy in adults}, Neurol, 12 (2006),
  pp.~279--292.

\bibitem{19}
{\sc K.~Painter and T.~Hillen}, {\em Mathematical modelling of glioma growth:
  The use of diffusion tensor imaging {(DTI)} data to predict the anisotropic
  pathways of cancer invasion}, Journal of Theoretical Biology, 323 (2013),
  pp.~25--39.

\bibitem{20}
\leavevmode\vrule height 2pt depth -1.6pt width 23pt, {\em Transport and
  anisotropic diffusion models for movement in oriented habitats}, in
  Dispersal, individual movement and spatial ecology: A mathematical
  perspective, M.~Lewis, P.~Maini, and S.~Petrovskii, eds., 2013.

\bibitem{23}
{\sc E.~M. Rutter, T.~L. Stepien, B.~J. Anderies, J.~D. Plasencia, E.~C. Woolf,
  A.~C. Scheck, G.~H. Turner, Q.~Liu, D.~Frakes, V.~Kodibagkar, Y.~Kuang, M.~C.
  Preul, and E.~J. Kostelich}, {\em Mathematical analysis of glioma growth in a
  murine model}, Scientific Reports, 7 (2017), pp.~1--16.

\bibitem{2}
{\sc A.~H.~V. Schapira}, {\em Neurology and Clinical Neuroscience}, Elsevier,
  Philadelphia, 2007.

\bibitem{4}
{\sc N.~Shigesada and K.~Kawasaki}, {\em Biological Invasions: Theory and
  Practice}, Oxford, 1997.

\bibitem{45}
{\sc J.~Smoller}, {\em Shock Waves and Reaction--Diffusion Equations},
  Springer, 1983.

\bibitem{22}
{\sc S.~H. Strogatz}, {\em Nonlinear Dynamics and Chaos: with Applications to
  Physics, Biology, Chemistry and Engineering}, Westview Press, 2001.

\bibitem{18}
{\sc P.~Sundgren, Q.~Dong, D.~Gomez-Hassan, S.~Mukherji, P.~Maly, and
  R.~Welsh}, {\em Diffusion tensor imaging of the brain: review of clinical
  applications}, Neuroradiology, 46 (2004), pp.~339--350.

\bibitem{1}
{\sc K.~Swanson, E.~Alvord, and J.~Murray}, {\em Virtual brain tumours
  (gliomas) enhance the reality of medical imaging and highlight inadequacies
  of current therapy}, British Journal of Cancer, 86 (2002), pp.~14--18.

\bibitem{16}
{\sc K.~Swanson, J.~E. Alvord, and J.~Murray}, {\em A quantitative model for
  differential motility of gliomas in grey and white matter}, Cell
  Proliferation, 33 (2000), pp.~317--329.

\bibitem{17}
\leavevmode\vrule height 2pt depth -1.6pt width 23pt, {\em Virtual brain
  tumours (gliomas) enhance the reality of medical imaging and highlight
  inadequacies of current therapy}, British Journal of Cancer, 86 (2002),
  pp.~14--18.

\bibitem{13}
{\sc P.~Tracqui, G.~Cruywagen, D.~Woodward, G.~Bartooll, J.~Murray, and J.~E.
  Alvord}, {\em A mathematical model of glioma growth: the effect of
  chemotherapy on spatio-temporal growth}, Cell Proliferation, 28 (1995),
  pp.~17--31.

\bibitem{28}
{\sc X.~Zeng, M.~A. Saleh, and J.~P. Tian}, {\em On finite volume
  discretization of infiltration dynamics in tumor growth models}, Advances in
  Computational Mathematics, 45 (2019), pp.~3057--3094.

\bibitem{new-50}
{\sc H.~Zhou, Z.~Sheng, and G.~Yuan}, {\em Positivity preserving finite volume
  scheme for the nagumo-type equations on distorted meshes}, Applied
  Mathematics and Computation, 336 (2018), pp.~182 -- 192.

\end{thebibliography}

\end{document}
